\theoremstyle{plain}
\newtheorem{theorem}{Theorem}
\newtheorem{prop}[theorem]{Proposition}
\newtheorem{lemma}[theorem]{Lemma}
\newtheorem{corollary}[theorem]{Corollary}
\theoremstyle{definition}
\newtheorem{definition}[theorem]{Definition}
\numberwithin{theorem}{section} 
\theoremstyle{remark}
\newtheorem*{remark}{Remark}
\DeclareMathOperator{\Tr}{Tr}
\DeclareMathOperator{\Gal}{Gal}
\DeclareMathOperator{\Sym}{Sym}
\DeclareMathOperator{\vspan}{span}
\DeclareMathOperator{\sgn}{sgn}
\DeclareMathOperator{\lcm}{lcm}
\newcommand{\Fp}{\mathbb{F}_p}
\newcommand{\Fq}{\mathbb{F}_q}
\newcommand{\Cp}{\mathbb{C}_p}
\newcommand{\Qp}{\mathbb{Q}_p}
\newcommand{\Qq}{\mathbb{Q}_q}
\newcommand{\Qqk}{\mathbb{Q}_{q^k}}
\newcommand{\Zp}{\mathbb{Z}_p}
\newcommand{\Zq}{\mathbb{Z}_q}
\newcommand{\Zqk}{\mathbb{Z}_{q^k}}
\newcommand{\Zptimes}{\mathbb{Z}_p^\times}
\DeclareMathOperator{\Hom}{Hom}
\DeclareMathOperator{\ordp}{ord}
\newcommand{\ord}[1]{\ordp_p{#1}}
\date{\today}
\title{ Exponential Sums of Witt Towers over Affinoids }
\author{Matthew Schmidt}
\email{mwschmid@buffalo.edu}
\address{Department of Mathematics, SUNY Buffalo}
\subjclass[2010]{11T23 (primary), 11L07, 13F35}
\keywords{Exponential Sums, Dwork Theory }
\begin{document}

\begin{abstract}
In this paper we construct a Dwork theory for general exponential sums over affinoids in Witt towers. Using this, we compute the degree of the $L$-function, its Hodge polygon and examine when the Hodge and Newton polygons coincide.
\end{abstract}

\maketitle 
\tableofcontents
     
Let $p$ be a prime, $q=p^a$ a $p$-power, and let $m\geq 1$ be an integer such that $p>m$. Write $\Gal(\Qq/\Qp)=\langle\tau\rangle$. Suppose we have:
	$$f(x)=\sum_{i=0}^{m-1}\sum_{j=1}^\ell\sum_{k=0}^{d_{ij}}V^i(a_{ijk}\frac{1}{(x-{P}_j)^k}, 0, \cdots, )\in W_m(\Fq[\frac{1}{x-{P}_1}, \cdots, \frac{1}{x-{P}_1}]),$$
	where $P_1,\cdots, P_\ell$ are distinct elements in $\Fq\cup\{\infty\}$, $W_m(R)$ is the truncated  ring of Witt vectors of some ring $R$, and $V$ is the Witt vector shifting operator. Without loss of generality, take $p\nmid d_{ij}$ for all $i,j$ and suppose that for each $1\leq j\leq \ell$, the maximum $\max_i d_{ij}p^{m-i-1}$ is uniquely achieved. We will assume that $P_1 = 0$ and $P_2=\infty$.   
If $\zeta_{p^m}$ is a primitive $p^m$th root of unity, the exponential sum, $L$-function and characteristic function $C_f(s)$ attached to $f(x)$ are:
\begin{align}
S_f(k) &= \sum_{\substack{x\in\mathbb{F}_{q^k}^\times,\\ x\neq {P}_1, \cdots, {P}_\ell}}\zeta_{p^m}^{\Tr_{W_m(\mathbb{F}_{q^k})/W_m(\Fp)}(f(x))}\\
L_f(k) &= \exp(\sum_{k=1}^\infty S_f(k)\frac{s^k}{k}) \\
C_f(k) &=\exp(\sum_{k=1}^\infty -(q^k-1)^{-1} S_f(k)\frac{s^k}{k}).
\end{align}

In this paper we build on the methods developed in \cite{Schmidt1}, where we constructed a ``universal'' Dwork theory that applies to exponential sums over affinoids. Using the framework from there, we build an alternative dwork cohomology using a truncated Artin-Hasse exponential which allows us to directly compute the degree of the $L$-function. To our knowledge, the general ($m>1$) truncated Artin-Hasse exponential has not been used in this setting before. Furthermore, we compute the Hodge polygon via the traditional Artin-Hasse exponential and use these $p$-adic estimates to generalize Zhu's result from \cite{Zhu} about when the Newton and Hodge bounds coincide.

Our main results are the following two theorems.
\begin{theorem}\label{theorem:main1}
The power series $L_f(s)$ is a polynomial in $s$ of degree 
	\[
		d = (\sum_{j=1}^\ell (\max_{0\leq i\leq m-1} p^{m-i-1}d_{ij}+1))-2.
	\]
\end{theorem}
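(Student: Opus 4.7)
The plan is to deploy the alternative Dwork cohomology built in the earlier part of the paper via the \emph{truncated} Artin--Hasse exponential $E_m(X)$. The key structural point is that $E_m(X)$ is a polynomial (of degree $p^{m-1}$ in $X$), so the associated twisted Frobenius operator $\alpha$ on the Banach space $B$ of overconvergent functions on the affinoid $X = \mathbb{P}^1\setminus\{P_1,\dots,P_\ell\}$ has image contained in a finite-dimensional subspace of $B$. The trace formula from \cite{Schmidt1} then expresses $L_f(s)$ as a Fredholm determinant of $\alpha^a$ (up to cohomological shift), which will automatically be polynomial; the degree one extracts is simply the rank of the image of $\alpha$.

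The first step is to fix a Banach basis of $B$ adapted to the poles: near each finite $P_j$, the monomials $(x-P_j)^{-k}$ for $k\ge 1$; near $P_2 = \infty$, the monomials $x^k$ for $k \ge 1$; together with constants. The second step is to track what $\alpha$ does to this basis. Writing the Witt-vector slot decomposition $f = \sum_i V^i(f_i)$ and unpacking the splitting function coordinate-wise, the $i$-th slot $V^i(a_{ijk}(x-P_j)^{-k})$ contributes through $E_m$ as if it were a classical polar term of weight $p^{m-i-1} k$: the $V$-shift corresponds exactly to composing with an extra Frobenius, scaling pole orders by $p$. Consequently, the local image of $\alpha$ near $P_j$ is contained in the span of $(x-P_j)^{-k}$ for $0 \le k \le \max_i p^{m-i-1} d_{ij}$, a space of dimension $\max_i p^{m-i-1}d_{ij}+1$. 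Summing over $j=1,\ldots,\ell$ gives the bound $\sum_j(\max_i p^{m-i-1}d_{ij}+1)$, and the correction $-2$ arises from the Euler characteristic of the affinoid: the trivial (constant) class appears once in each local expansion at the distinguished points $P_1=0$ and $P_2=\infty$ but represents a single global cohomology class, so the trace formula removes two units from the dimension count.

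The main obstacle will be upgrading this dimension count from an upper to an exact bound, i.e.\ showing the leading coefficient of the Fredholm determinant is nonzero. This is exactly where the uniqueness hypothesis that $\max_i d_{ij}p^{m-i-1}$ is uniquely achieved at each $P_j$ enters: it forces the top entry of the matrix of $\alpha$ at each pole to be a $p$-adic unit times the Teichm\"uller lift of the corresponding $a_{ij d_{ij}}$, and the non-vanishing of this leading term propagates (via multiplicativity across the $\ell$ poles) to non-vanishing of the top coefficient of $\det(1-s\alpha^a)$. Once this is secured, combining with the nuclearity of $\alpha$ and the trace formula from \cite{Schmidt1} yields $\deg L_f(s) = d$ as claimed; the same leading-term analysis will feed directly into the Hodge polygon computation in the next section.
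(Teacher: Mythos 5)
Your proposal rests on a structural claim that is false: the truncated Artin--Hasse exponential $E_m(x)=\exp\bigl(\sum_{i=0}^{m}x^{p^i}/p^i\bigr)$ is \emph{not} a polynomial --- only the argument of the exponential is truncated; $E_m$ itself is an infinite power series with a finite disk of convergence. Consequently the operator $\alpha=U\circ E_f$ does not have image in a finite-dimensional subspace of $\mathcal{H}^\dagger$; it is merely completely continuous, $\det(1-\alpha_a s)$ is an entire function rather than a polynomial, and ``$\deg L_f=\operatorname{rank}(\alpha)$'' has no content. Moreover, even granting a finite-rank $\alpha$, the Fredholm determinant computes $C_f(s)$, and $L_f(s)=C_f(s)/C_f(qs)$ would be a ratio of polynomials of equal degree, so your route never isolates the degree of $L_f$ itself. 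What the truncation actually buys is different: the infinite product $\hat{\theta}_k(x)=\prod_{j\ge 0}\theta_k(x^{p^j})$ telescopes to $\exp\bigl(\sum_{v=0}^{k-1}\gamma_{kv}x^{p^v}\bigr)$ because $\pi_k$ is a root of the truncated sum, so that $H=\log\hat{E}_f$ is a \emph{finite} Laurent polynomial with $\deg_j H=\max_i p^{m-i-1}d_{ij}$ (this is where the unique-maximum hypothesis enters --- it prevents cancellation of the top coefficient $\gamma_{m-i,m-i-1}a_{ij,d_{ij}}$ --- not in a leading-term analysis of the Fredholm determinant).

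The finiteness and the exact degree then come from cohomology, not from the operator: one sets $D=E+E(H)$ with $E=x\frac{d}{dx}$, proves $D$ is injective and that $D\circ(q\alpha_a)=\alpha_a\circ D$, whence $L_f(s)=\det(1-\bar{\alpha}_a s\mid \mathcal{H}^\dagger/D\mathcal{H}^\dagger)$, and computes $\dim \mathcal{H}^\dagger/D\mathcal{H}^\dagger$ by an explicit reduction algorithm on each Mittag--Leffler component. Your accounting of the $-2$ as ``the constant counted twice at $P_1$ and $P_2$'' is also incorrect (that would remove only $1$, and in any case the constant appears in every local expansion). The correct bookkeeping, with $R_j=\max_i p^{m-i-1}d_{ij}$: at $P_1=0$ the operator $E$ preserves pole order and kills constants, so the top class $X_1^{R_1}$ reduces and the local contribution is $R_1$ (constant included); at $P_2=\infty$ the constant is already counted, contributing $R_2$; at each $P_j$ with $j\ge 3$ the operator $E$ raises the pole order by one, contributing $R_j+1$. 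The total is $\sum_j(R_j+1)-2$. Without the differential operator $D$, the quotient space, and this reduction, the proposal does not reach the stated degree.
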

This result is not new (see Remark 4.7 in \cite{KostersWanPub}), but prior results use the geometry of the Witt tower whereas we utilize $p$-adic methods. Our second result is the computation of the Hodge polygon of $L_f(s)$, which is described as follows.

\begin{theorem}\label{theorem:main2}
Let $\mathrm{NP}_f$ be the $p$-adic Newton polygon of $L_f(s)$. Then $\mathrm{NP}_f$ lies above the polygon with slopes: 
 \[
 	\left \{\left \{ \frac{an}{d_{i_j,j}p^{m-i_j-1}}\right \}_{n=0}^{d_{i_j,j}p^{m-i_j-1}}\right \}_{j=1}^\ell,
 \]	
 where $i_j$ is such that $p^{m-i_j-1}d_{i_j,j}=\max_{0\leq i\leq m-1} p^{m-i-1}d_{ij}$. Furthermore, if $i_j=m-1$ for all $1\leq j\leq \ell$, these polygons coincide if and only if $p\equiv 1\bmod \lcm_j d_{i_j,j}$.
\end{theorem}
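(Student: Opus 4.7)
The plan is to run Dwork's trace formula using the classical (untruncated) Artin--Hasse exponential in the framework of \cite{Schmidt1}, and to read off the Hodge lower bound from the diagonal entries of the Frobenius matrix in a carefully chosen monomial basis. For each pole $P_j$ fix a local uniformizer $t_j$ (namely $t_j = 1/(x-P_j)$ when $P_j\neq\infty$, and $t_j = 1/x$ when $P_j = \infty$). The overconvergent Banach space $B$ decomposes as $\bigoplus_{j=1}^\ell B_j$, where $B_j$ is spanned by monomials in $t_j$ with a prescribed growth rate. Applying $E_p(Y) = \exp(\sum_{i \geq 0} Y^{p^i}/p^i)$ to each Witt coordinate of $f$ and multiplying the resulting kernels produces a completely continuous Frobenius endomorphism $\alpha$ on $B$; the Dwork trace formula recovers $L_f(s)$ up to trivial slope-$0$ and slope-$a$ factors, which account for the ``$-2$'' in the degree formula from \Cref{theorem:main1}.

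In the natural basis of each $B_j$ indexed by integers $n \geq 0$, I would compute the $p$-adic valuation of the $n$th diagonal entry of $\alpha$. The dominant contribution at pole $P_j$ comes from the Witt coordinate $i_j$ achieving $D_j := \max_i d_{ij} p^{m-i-1}$; by hypothesis this maximum is uniquely attained, so the leading term is unambiguous. Expanding the Artin--Hasse series and tracking the combined exponents of a uniformizer of $\mathbb{Z}_p[\zeta_{p^m}]$ shows that this valuation equals $an/D_j$, matching the slopes listed in the theorem. The standard inequality that the Newton polygon of a Fredholm determinant lies above the polygon obtained by sorting the $p$-adic valuations of the diagonal entries then produces the desired lower bound on $\mathrm{NP}_f$.

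For the ordinarity statement, restrict to $i_j = m-1$ for every $j$, so $D_j = d_{m-1,j}$ and the top Witt coordinate dominates; the problem reduces essentially to the classical ($m=1$) setting of \cite{Zhu}. The Newton and Hodge polygons coincide iff the induced map on the graded pieces of $\alpha$ (a Hasse--Witt-style matrix whose entries are Artin--Hasse coefficients times multinomial residues) is invertible modulo $p$. Unwinding the combinatorics, each relevant coefficient is a unit mod $p$ precisely when multiplication by $p$ acts trivially on the indexing sets $\mathbb{Z}/d_{i_j,j}\mathbb{Z}$, i.e.\ when $p \equiv 1 \pmod{d_{i_j,j}}$ for every $j$; combining over $j$ gives $p \equiv 1 \pmod{\lcm_j d_{i_j,j}}$. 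The main obstacle is the ``only if'' direction: ruling out accidental cancellations when $p \not\equiv 1 \pmod{\lcm_j d_{i_j,j}}$. Here I would follow Zhu by exploiting $\Gal(\mathbb{F}_q/\mathbb{F}_p)$-equivariance of $\alpha$, combined with a pigeonhole argument on the orbits of multiplication-by-$p$ on $\{1,\dots,D_j-1\}$, which forces the Newton polygon strictly below the Hodge polygon unless every $d_{i_j,j}$ divides $p-1$.
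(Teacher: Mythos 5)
Your plan follows essentially the same route as the paper: the classical Artin--Hasse splitting function within the Dwork/Banach framework of \cite{Schmidt1}, a Mittag--Leffler decomposition into the summands attached to each pole $P_j$, valuation estimates for the Frobenius matrix in a weighted monomial basis combined with the Fredholm-determinant expansion to obtain the Hodge lower bound, and a sharpness analysis of those estimates (equivalently, unit-mod-$p$ leading coefficients, which reduces to $d_{i_j,j}\mid p-1$ for every $j$) for the coincidence criterion. One small slip to fix: when the polygons fail to coincide, the Newton polygon lies strictly \emph{above} the Hodge polygon at some vertex, not below it.
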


\section{Lifting $f(x)$}

We will first lift $f(x)$ to a $p$-adic ring in which we can construct a $p$-adic Dwork cohomology. Recall, that if $a\in W(R)$, we denote by $\bar{a}$ the image of $a$ in $R$, that is $a \bmod p$, and for $a\in R$, we define the Teichm\"uller lift of $a$ in $W(R)$ to be $\hat{a}$.

\begin{lemma}\label{witt_iso_m}
There is a ring isomorphism:
\begin{align*}
	\omega: W_m(\mathbb{F}_{q^k}) &\to \mathbb{Z}_{q^k}/p^m\mathbb{Z}_{q^k}\\
		(x_0, \cdots, x_{m-1})&\mapsto \sum_{i=0}^{m-1}p^i\widehat{x_i^{p^{-i}}} \bmod p^m.
\end{align*}
\end{lemma}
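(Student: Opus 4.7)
The plan is to derive the lemma by truncating the classical ring isomorphism $\omega_\infty:W(\mathbb{F}_{q^k})\to\mathbb{Z}_{q^k}$. Since $\mathbb{F}_{q^k}$ is a perfect field of characteristic $p$, each root $x_i^{p^{-i}}$ is well-defined, and the formula $(x_0,x_1,\dots)\mapsto\sum_{i\geq 0}p^i\widehat{x_i^{p^{-i}}}$ identifies $W(\mathbb{F}_{q^k})$ with the unique unramified extension $\mathbb{Z}_{q^k}$ of $\mathbb{Z}_p$ with residue field $\mathbb{F}_{q^k}$. I would invoke this as a known fact from the theory of Witt vectors over perfect fields (e.g., Serre, \emph{Local Fields}, Ch.~II, \S 6). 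Bijectivity of $\omega_\infty$ follows from the existence and uniqueness of the Teichm\"uller expansion $z=\sum_{i\geq 0}p^i\widehat{a_i}$ of elements $z\in\mathbb{Z}_{q^k}$, and the ring-homomorphism property is precisely the content of the Witt polynomial addition and multiplication formulas (most cleanly checked via ghost components).

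Next, I would use the canonical truncation exact sequence
\[
0\to V^mW(\mathbb{F}_{q^k})\to W(\mathbb{F}_{q^k})\to W_m(\mathbb{F}_{q^k})\to 0,
\]
where the right-hand map projects onto the first $m$ Witt coordinates. The key calculation is to verify that $\omega_\infty\bigl(V^mW(\mathbb{F}_{q^k})\bigr)=p^m\mathbb{Z}_{q^k}$. The forward inclusion is immediate from the formula: an element $(0,\dots,0,y_m,y_{m+1},\dots)$ maps to $\sum_{i\geq m}p^i\widehat{y_i^{p^{-i}}}\in p^m\mathbb{Z}_{q^k}$. For the reverse, any $z\in p^m\mathbb{Z}_{q^k}$ admits a Teichm\"uller expansion $z=\sum_{i\geq m}p^i\widehat{a_i}$, and setting $y_i=a_i^{p^i}$ produces an explicit preimage. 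Applying the first isomorphism theorem then yields the claimed ring isomorphism $\omega:W_m(\mathbb{F}_{q^k})\to\mathbb{Z}_{q^k}/p^m\mathbb{Z}_{q^k}$ with precisely the stated formula, since only the terms with $0\leq i\leq m-1$ survive modulo $p^m$.

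The main obstacle, which I would handle by citation rather than reprove, is the classical identification $\omega_\infty$: verifying directly that $\sum p^i\widehat{x_i^{p^{-i}}}$ respects Witt addition requires a somewhat intricate manipulation of Witt polynomials, best organized through the ghost map. Everything else in the argument -- the identification of the kernel of truncation with $V^mW$ and the matching of $V^mW$ with $p^m\mathbb{Z}_{q^k}$ under $\omega_\infty$ -- is formal and can be carried out in a few lines.
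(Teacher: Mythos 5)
Your proposal is correct and follows essentially the same route as the paper: both invoke the classical isomorphism $W(\mathbb{F}_{q^k})\cong\mathbb{Z}_{q^k}$ given by the Teichm\"uller expansion and then pass to the quotient via $W_m(K)\cong W(K)/V^mW(K)$. You simply spell out the verification that $V^mW(\mathbb{F}_{q^k})$ corresponds to $p^m\mathbb{Z}_{q^k}$, which the paper leaves implicit.
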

\begin{proof}
It's well known that $\Zqk\cong W(\mathbb{F}_{q^k})$ via the isomorphism 
$$(x_0, x_1,x_2, \cdots)\mapsto \sum_{i=0}^\infty\widehat{x_i^{p^{-i}}}p^i,$$
 and so the lemma follows from  the isomorphism $W_m(K)\cong W(K)/V^mW(K)$.
\end{proof}

Define the additive character $\chi: W_m(\Fp) \to\Cp$ by mapping $\chi(x)= \zeta_{p^m}^{\omega(x)}$ and extend it to $W_m(\mathbb{F}_{q^k})$ by composing it with the trace: 
\[
	\chi_{q^k}(x) = \chi(\omega(\Tr_{W_m(\mathbb{F}_{q^k})/W_m(\Fp)}(x))).
\]
The exponential sum can then be written:
 \begin{align}\label{expo_sum_char}
 	S_f(k)  &= \sum_{\substack{x\in\mathbb{F}_{q^k}^\times,\\ x\neq {P}_1, \cdots, {P}_\ell}}\chi_{q^k}(f(x)).
 \end{align}

Using Lemma~\ref{witt_iso_m} we can lift the exponential sum $S_f(k)$ from an object defined over a finite field as in (\ref{expo_sum_char}), to one defined over a $p$-adic ring. However, before we can choose an appropriate lifting of $f(x)$  some technical lemmas are required.
  
\begin{lemma}\label{lemma:pkpower}
	Let $x,y\in \Cp$ such that $\ord(x)\geq 0$ and $\ord(y)\geq 0$.  Then for $n\geq 1$,
		$(x+py)^{p^n}= x^{p^n} + p^ng(x,y)$.
\end{lemma}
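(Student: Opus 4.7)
The plan is to prove this by a direct binomial expansion together with the standard $p$-adic valuation estimate for central binomial coefficients $\binom{p^n}{k}$.

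First I would write
\[
(x+py)^{p^n} \;=\; x^{p^n} + \sum_{k=1}^{p^n}\binom{p^n}{k}\,p^k\, x^{p^n-k} y^k,
\]
so that the task reduces to showing $p^n \mid \binom{p^n}{k} p^k$ for every $1 \le k \le p^n$, and then taking
\[
g(x,y) \;=\; \sum_{k=1}^{p^n} \frac{\binom{p^n}{k} p^k}{p^n}\, x^{p^n-k} y^k.
\]
Since $\ord(x) \ge 0$ and $\ord(y) \ge 0$, once each coefficient is shown to be a $p$-adic integer, $g(x,y)$ automatically lies in the valuation ring of $\mathbb{C}_p$.

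Next I would invoke the well-known identity (a consequence of Kummer's theorem, or equivalently of Legendre's formula applied to $(p^n)!/(k!(p^n-k)!)$):
\[
\ord\!\left(\binom{p^n}{k}\right) \;=\; n - \ord(k), \qquad 1 \le k \le p^n.
\]
Thus $\ord\!\left(\binom{p^n}{k} p^k\right) = n - \ord(k) + k$, and it remains to verify $k \ge \ord(k)$ for every $k \ge 1$. This is immediate: if $\ord(k) = r$, then $k \ge p^r \ge r+1 > r$ for $p \ge 2$, the case $r = 0$ being trivial.

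Therefore every coefficient $\binom{p^n}{k} p^k / p^n$ for $k \ge 1$ has nonnegative $p$-adic valuation, and factoring $p^n$ out of the sum gives exactly the claimed identity with $g(x,y)$ integral. There is no real obstacle here; the only point that might look delicate is the valuation formula for $\binom{p^n}{k}$, but it is a standard fact and can be cited directly.
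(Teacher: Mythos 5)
Your proof is correct and follows essentially the same route as the paper: binomial expansion, the valuation identity $\ord\binom{p^n}{k}=n-\ord(k)$, and the observation that $k\geq\ord(k)$. The only difference is cosmetic — you make the integrality of $g(x,y)$ slightly more explicit by dividing out $p^n$ term by term.
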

\begin{proof}
Clearly:
\begin{align*}
	(x+py)^{p^n} &= \sum_{r=0}^{p^n} \binom{p^n}{r}x^{n-r}(py)^r.
\end{align*}
When $r>0$, $\ord(\binom{p^n}{r}) = n-\ord(r)$ and hence
\begin{align*}
\ord(\binom{p^n}{r}x^{n-r}(py)^r) \geq (n-\ord(r))+r = n+(r-\ord(r))\geq n.
\end{align*}
Thus $\ord(\sum_{r=1}^{p^n} \binom{p^n}{r}x^{n-r}(py)^r)\geq n$ and the lemma follows.
\end{proof}

\begin{lemma}\label{lemma:tracepk}
Suppose $x\in \Cp$ is such that $\tau (x) =x^p$ and $x^{q^k}=x$, some $k\geq 1$. Then for any $b\in\mathbb{Z}$, $\Tr_{\mathbb{Q}_{q^k}/\Qp}(x^{p^b}) = \Tr_{\mathbb{Q}_{q^k}/\Qp}(x)$.
\end{lemma}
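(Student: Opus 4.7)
The plan is to exploit the fact that the hypotheses $\tau(x)=x^p$ and $x^{q^k}=x$ force $x$ to behave like a Teichmüller lift inside $\mathbb{Q}_{q^k}$, so that the Galois conjugates of $x$ over $\mathbb{Q}_p$ are exactly the iterated Frobenius powers $x^{p^i}$. Once that is established, the trace becomes the finite geometric sum $\sum_{i=0}^{ak-1} x^{p^i}$, and replacing $x$ by $x^{p^b}$ only shifts the index of summation, so the sum is invariant by a reindexing argument using the periodicity $x^{p^{ak}}=x$.

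More concretely, I would first note that $\tau$ extends to a continuous automorphism of $\overline{\mathbb{Q}}_p^{\mathrm{ur}}$ whose restriction to $\mathbb{Q}_{q^k}$ generates $\mathrm{Gal}(\mathbb{Q}_{q^k}/\mathbb{Q}_p)$, a cyclic group of order $ak$. The assumption $x^{q^k} = x = x^{p^{ak}}$ shows $x \in \mathbb{Q}_{q^k}$ (and in particular justifies interpreting $x^{p^b}$ for negative $b$ via the cyclic period $ak$). Iterating the identity $\tau(x)=x^p$ then gives $\tau^i(x) = x^{p^i}$ for all $i\in\mathbb{Z}$, and consequently
\[
\Tr_{\mathbb{Q}_{q^k}/\mathbb{Q}_p}(x^{p^b}) = \sum_{i=0}^{ak-1}\tau^i\bigl(x^{p^b}\bigr) = \sum_{i=0}^{ak-1} x^{p^{i+b}}.
\]

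Finally, because $x^{p^{ak}}=x$, the map $i \mapsto i+b \pmod{ak}$ is a bijection of $\{0,1,\dots,ak-1\}$ under which the summand $x^{p^{i+b}}$ is carried to $x^{p^j}$ for $j \equiv i+b \pmod{ak}$, with the corresponding values of $x^{p^j}$ unchanged. Reindexing thus yields $\sum_{i=0}^{ak-1} x^{p^{i+b}} = \sum_{j=0}^{ak-1} x^{p^j} = \Tr_{\mathbb{Q}_{q^k}/\mathbb{Q}_p}(x)$, completing the proof. The argument is essentially bookkeeping; the only mild subtlety is making sure the interpretation of $\tau$ as the Frobenius of $\mathbb{Q}_{q^k}/\mathbb{Q}_p$ is consistent with its original definition as the generator of $\mathrm{Gal}(\mathbb{Q}_q/\mathbb{Q}_p)$, which is immediate once one views everything inside $\mathbb{Q}_p^{\mathrm{ur}}$.
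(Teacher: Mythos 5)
Your proof is correct and follows essentially the same route as the paper's: expand the trace as $\sum_{r=0}^{ak-1}\tau^r$, use $\tau(x)=x^p$ to turn the conjugates into Frobenius powers, and reindex the resulting sum using the periodicity $x^{p^{ak}}=x$. The extra remarks about why $x\in\mathbb{Q}_{q^k}$ and how $\tau$ extends are fine but not needed beyond what the paper already takes for granted.
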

\begin{proof}
This is just a simple calculation: 
\begin{align*}
\Tr_{\mathbb{Q}_{q^k}/\Qp}(x^{p^b}) = \sum_{r=0}^{ak-1}\tau^r(x^{p^b}) = \sum_{r=0}^{ak-1}(x^{p^b})^{p^r} = \sum_{r=b}^{ak-1+b}x^{p^r} = \sum_{r=0}^{ak-1}x^{p^r} = \Tr_{\mathbb{Q}_{q^k}/\Qp}(x). 
\end{align*}
\end{proof}

We now make two observations. First, because $\widehat{x-P_j}$ is a Teichm\"uller lift, $\widehat{x-P_j}\equiv \widehat{x}-\widehat{P_j}\bmod p$ and so $\widehat{x-P_j}= (\widehat{x}-\widehat{P_j}) + pg(\widehat{x},\widehat{P}_j)$, some $g(X,Y)\in\mathbb{Z}[X,Y]$. Second, because $\widehat{x-P_j}$ is again a Teichm\"uller lift from $\mathbb{F}_{q^k}$, $\widehat{x-P_j} = (\widehat{x-P_j})^{q^k}=(\widehat{x-P_j})^{q^{bk}}$, any $b\geq 1$.  Thus, for any $b\geq 1$, applying Lemma~\ref{lemma:pkpower} yields:
\[
	\widehat{x-P_j} = (\widehat{x-P_j})^{q^{bk}} =((\widehat{x}-\widehat{P_j}) + pg(\widehat{x},\widehat{P}_j))^{q^{bk}} = (\widehat{x}-\widehat{P}_j)^{q^{bk}} + q^{bk}h(\widehat{x}, \widehat{P}_j).
\] 
If we take $b$ sufficiently large so that $q^{b}\geq p^{m-1}$, then 
\[	
	\widehat{x-P_j}\equiv (\widehat{x}-\widehat{P}_j)^{q^{bk}} \bmod p^m,	
\]
and so by Lemma~\ref{lemma:tracepk}, if $a\in \Fq$ then
\begin{align*}
	\zeta_{p^m}^{\Tr_{\Qqk/\Qp} (\widehat{a}^{p^{-i}}{(\widehat{x-P_j})}^{-p^{-i}})} &= \zeta_{p^m}^{\Tr_{\Qqk/\Qp} (\widehat{a}^{p^{-i}}(\widehat{x}-\widehat{P}_j)^{-p^{akb-i}}) + p^mh} \\&= \zeta_{p^m}^{\Tr_{\Qqk/\Qp} (\widehat{a}^{p^{-i}}(\widehat{x}-\widehat{P}_j)^{-p^{akb-i}})}\cdot \zeta_{p^m}^{p^mh} \\
	&=\zeta_{p^m}^{\Tr_{\Qqk/\Qp} (\widehat{a}(\widehat{x}-\widehat{P}_j)^{-1})}. 
\end{align*}

The above discussion induces a $p$-adic lifting of $f$ that preserves the exponential sum:
\begin{lemma}\label{f_lift} If 
$$\hat{f}=\sum_{i=0}^{m-1}\sum_{j=1}^\ell\sum_{k=0}^{d_{ij}}p^i\hat{a}_{ijk}\frac{1}{(x-\widehat{P}_j)^{k}}\in \Zq[\frac{1}{x-\widehat{P}_1}, \cdots, \frac{1}{x-\widehat{P}_\ell}],$$
then:
$$S_f(k) = \sum_{\substack{x\in\widehat{\mathbb{F}_{q^k}^\times},\\ x\neq\widehat{P}_1, \cdots, \widehat{P}_\ell}}\zeta_{p^m}^{\Tr_{\mathbb{Q}_{q^k}/\Qp}(\hat{f}(x))}.$$
\end{lemma}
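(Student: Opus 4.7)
The plan is to apply the ring isomorphism $\omega$ of Lemma~\ref{witt_iso_m} termwise to $f(x)$, translating the Witt-vector exponential sum into a sum over $\Zqk/p^m\Zqk$, and then to recognize the resulting expression as $\Tr_{\Qqk/\Qp}(\hat{f}(\hat{x}))\bmod p^m$. First I note that $\omega$ is a ring isomorphism compatible with the natural Galois actions on both sides, hence commutes with trace modulo $p^m$; so $\chi_{q^k}(f(x))=\zeta_{p^m}^{\Tr_{\Qqk/\Qp}(\omega(f(x)))}$ for any integer lift of the exponent. Using $\omega(V^i(a,0,\ldots,0))\equiv p^i\widehat{a^{p^{-i}}}\bmod p^m$ and multiplicativity of the Teichm\"uller lift, the $(i,j,k)$-summand of $f(x)$ contributes $p^i\widehat{a_{ijk}^{p^{-i}}}(\widehat{x-P_j})^{-kp^{-i}}\bmod p^m$ to $\omega(f(x))$.

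Next, each factor $\widehat{a_{ijk}^{p^{-i}}}(\widehat{x-P_j})^{-kp^{-i}}=[\hat{a}_{ijk}(\widehat{x-P_j})^{-k}]^{p^{-i}}$ is a Teichm\"uller lift, so in particular satisfies $\tau(\cdot)=(\cdot)^p$ and $(\cdot)^{q^k}=(\cdot)$; Lemma~\ref{lemma:tracepk} with $b=-i$ therefore removes the $p^{-i}$-th power inside the trace, and the exponent reduces modulo $p^m$ to $\Tr_{\Qqk/\Qp}\!\big(\sum_{i,j,k}p^i\hat{a}_{ijk}(\widehat{x-P_j})^{-k}\big)$.

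Finally, to pass from $\widehat{x-P_j}$ to $\hat{x}-\hat{P}_j$ I invoke the calculation preceding the lemma: choosing $b$ with $q^b\geq p^{m-1}$ gives $\widehat{x-P_j}\equiv(\hat{x}-\hat{P}_j)^{q^{bk}}\bmod p^m$, whence $(\widehat{x-P_j})^{-k}\equiv(\hat{x}-\hat{P}_j)^{-kq^{bk}}\bmod p^m$. Using $\hat{a}_{ijk}^{q^{bk}}=\hat{a}_{ijk}$ (as $\hat{a}_{ijk}$ is a Teichm\"uller lift of an element of $\Fq$), I write $\hat{a}_{ijk}(\hat{x}-\hat{P}_j)^{-kq^{bk}}=[\hat{a}_{ijk}(\hat{x}-\hat{P}_j)^{-k}]^{q^{bk}}$, and another appeal to Lemma~\ref{lemma:tracepk} strips the $q^{bk}$-power. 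Summing, the exponent of $\zeta_{p^m}$ becomes $\Tr_{\Qqk/\Qp}(\hat{f}(\hat{x}))\bmod p^m$, and substituting into (\ref{expo_sum_char}) via the bijection $x\leftrightarrow\hat{x}$ yields the lemma. The main subtlety is precisely this second use of Lemma~\ref{lemma:tracepk}: the base element $\hat{a}_{ijk}(\hat{x}-\hat{P}_j)^{-k}$ is not literally a Teichm\"uller lift, but its reduction modulo $p^m$ becomes one after the $q^{bk}$-power approximation, which is what makes the interchange of trace and $q^{bk}$-power legitimate.
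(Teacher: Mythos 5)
Your route is the same as the paper's: apply $\omega$ termwise, strip the $p^{-i}$-th powers with Lemma~\ref{lemma:tracepk}, then trade $\widehat{x-P_j}$ for $(\hat{x}-\hat{P}_j)^{q^{bk}}$ via Lemma~\ref{lemma:pkpower}. Your first use of Lemma~\ref{lemma:tracepk} is sound, since $\hat{a}_{ijk}(\widehat{x-P_j})^{-k}$ genuinely is a Teichm\"uller lift (multiplicativity of the lift), so $\tau(\cdot)=(\cdot)^p$ holds exactly. The gap is precisely the step you flag at the end, and the justification you offer does not close it. Lemma~\ref{lemma:tracepk} needs the \emph{exact} identities $\tau(y)=y^p$ and $y^{q^k}=y$, because its proof re-indexes the Galois-orbit sum $\sum_r\tau^r(y)$; for $z=\hat{a}_{ijk}(\hat{x}-\hat{P}_j)^{-k}$ with $j\geq 3$ one has $\tau(\hat{x}-\hat{P}_j)=\hat{x}^p-\hat{P}_j^p\neq(\hat{x}-\hat{P}_j)^p$, so neither hypothesis holds. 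Knowing that $z^{q^{bk}}$ is congruent mod $p^m$ to the Teichm\"uller lift $y=\hat{a}_{ijk}(\widehat{x-P_j})^{-k}$ only lets you replace $\Tr(z^{q^{bk}})$ by $\Tr(y)$ mod $p^m$ --- which is the congruence you already had --- and gives no relation at all between $\Tr(z^{q^{bk}})$ and $\Tr(z)$. In general, a unit $z$ and the Teichm\"uller lift of its reduction have traces agreeing only mod $p$, not mod $p^m$.

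This is not a repairable presentation issue but an actual failure of the step. Take $p=3$, $m=2$, $q=3$, $k=1$ (so the trace is the identity on $\mathbb{Q}_3$), a pole at $P_j=1$ and the point $x=2$: then $\widehat{x-P_j}=\hat{1}=1$, so $\Tr\bigl((\widehat{x-P_j})^{-1}\bigr)=1$, whereas $\hat{x}-\hat{P}_j=-1-1=-2$ and $\Tr\bigl((\hat{x}-\hat{P}_j)^{-1}\bigr)=-\tfrac{1}{2}\equiv 4\bmod 9$. Hence the congruence $\Tr\bigl(\hat{a}(\widehat{x-P_j})^{-k}\bigr)\equiv\Tr\bigl(\hat{a}(\hat{x}-\hat{P}_j)^{-k}\bigr)\bmod p^m$ that your final step (and the paper's own display before the lemma, whose last equality silently invokes Lemma~\ref{lemma:tracepk} on the same non-Teichm\"uller base) requires is false for $m\geq 2$ whenever $P_j\neq 0,\infty$. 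The difficulty is invisible for $j=1,2$, where $\widehat{x-P_j}$ equals $\hat{x}-\hat{P}_j$ (or the term is polynomial in $x$) exactly, and for $m=1$, where only a mod-$p$ congruence is needed; but for $j\geq 3$ and $m\geq 2$ the lemma needs either a different lift $\hat{f}$ or a genuinely different argument.
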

\begin{proof}

If $x\in{\mathbb{F}_{q^k}^\times}$, clearly $\omega(f(x))=\hat{f}(\hat{x})$ and hence:
\begin{align*}
	\Tr_{W_m(\mathbb{F}_{q^k})/W_m(\Fp)}(f({x}_0)) &= \Tr_{\Qqk/\Qp} (\widehat{f}(\hat{x}_0)),
\end{align*}
and the lemma follows.
\end{proof}


\section{$p$-adic Banach Spaces}

Define a $p$-adic affinoid ring as follows.
\begin{definition}
For $0<r\leq 1$, let 
\[
	\mathbb{A}_r=\{x\in\Cp : |x|_p\leq 1/r, |x-\widehat{P_j}|_p\geq r\textrm{ for } 2\leq j\leq \ell\}.
\]	
It's easy to see that
\[
	\mathbb{A}_1=\{x\in\Cp : |x|_p=1\textrm{ and } |x-\widehat{P}_j|_p=1\textrm{ for } 2\leq j\leq \ell\}=\{x\in\widehat{\Fp^{alg}} : \bar{x}\neq P_j\}.
\]
\end{definition}
For convenience, we will often write $\mathbb{A}_1=\mathbb{A}$. 
\begin{definition}
Consider the affinoid space:
\begin{align*}
	\mathcal{H}^\dagger = \mathbb{A}\langle \frac{1}{x-\widehat{P}_1}, \cdots, \frac{1}{x-\widehat{P}_\ell}\rangle.
\end{align*}
	The ring $\mathcal{H}^\dagger$ is the set of overconvergent anaytic elements on $\mathbb{A}$. (That is, any $h(x)\in \mathbb{A}[[ \frac{1}{x-\widehat{P}_1}, \cdots, \frac{1}{x-\widehat{P}_\ell}]]$ lies in $\mathcal{H}^\dagger$ if and only if it can be evaluated at any $x\in\mathbb{A}$.) 
\end{definition}

In this section we'll study some fundamental properties of $\mathcal{H}^\dagger$ and look at some special quotient spaces. We start with a basic result, the Mittag-Leffler decomposition

\begin{prop}
If $\mathcal{H}^\dagger_j= \mathbb{A}\langle \frac{1}{x-\widehat{P}_j}\rangle$, there exists an isomorphism of $\mathbb{A}$-Banach modules
	$$\mathcal{H}^\dagger \cong \bigoplus_{j=1}^\ell \mathcal{H}^\dagger_j,$$
such that every $g$ in $\mathcal{H}^\dagger$ can be written $g=\sum_{j=1}^\infty (g)_j$, with $(g)_j\in\mathcal{H}^\dagger_j$.
\end{prop}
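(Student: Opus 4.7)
The plan is to construct the decomposition algebraically on the dense subring of Laurent polynomials and then extend by continuity. Write $T_j := (x-\widehat{P}_j)^{-1}$ for $P_j \neq \infty$, and $T_2 := x$ for the pole $P_2 = \infty$, so that $\mathbb{A}[T_1,\dots,T_\ell]$ is dense in $\mathcal{H}^\dagger$. The central computational tool is the partial-fractions identity
\[
T_j T_k \;=\; \frac{T_j - T_k}{\widehat{P}_j - \widehat{P}_k} \qquad (j\neq k,\ j,k\neq 2),
\]
together with the analogous identity $T_2 T_k = 1 + \widehat{P}_k T_k$ when one index is $2$. Since the $P_j$ are distinct modulo $p$, one has $|\widehat{P}_j - \widehat{P}_k|_p = 1$, so these identities are \emph{isometries} on monomials with respect to the Gauss norm.

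By induction on the number of distinct indices appearing in a monomial $T_1^{n_1}\cdots T_\ell^{n_\ell}$, iterated application of these identities rewrites every such monomial as an $\mathbb{A}$-linear combination of pure monomials $T_j^{k}$ with coefficients of $p$-adic norm $\leq 1$. Allocating residual constant terms to index $j = 1$ (so that $\mathcal{H}^\dagger_j$ for $j\geq 2$ is normalized to the $T_j$-divisible part), this defines $\mathbb{A}$-linear, norm-contractive projections $\pi_j\colon \mathbb{A}[T_1,\dots,T_\ell] \to \mathbb{A}[T_j]$ satisfying $\sum_j \pi_j = \mathrm{id}$. Since $\mathbb{A}[T_1,\dots,T_\ell]$ is dense in $\mathcal{H}^\dagger$ and each $\pi_j$ is bounded, it extends uniquely to a continuous $\mathbb{A}$-linear map $\pi_j\colon \mathcal{H}^\dagger \to \mathcal{H}^\dagger_j$, and continuity preserves $\sum_j \pi_j = \mathrm{id}$.

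The resulting map $\Phi\colon \mathcal{H}^\dagger \to \bigoplus_j \mathcal{H}^\dagger_j$, $g\mapsto ((g)_1,\dots,(g)_\ell)$ with $(g)_j := \pi_j(g)$, is a bounded $\mathbb{A}$-linear left inverse of the obvious sum map $\Psi$. For directness (injectivity of $\Psi$), suppose $\sum_j h_j = 0$ with $h_j\in \mathcal{H}^\dagger_j$ in the above normalization. Then $h_j = -\sum_{k\neq j}h_k$ must simultaneously be a convergent Laurent series in $T_j$ and analytic in a $p$-adic neighborhood of $\widehat{P}_j$, which on matching the Laurent expansion at $\widehat{P}_j$ forces $h_j = 0$. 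Hence $\Phi$ and $\Psi$ are mutually inverse bounded $\mathbb{A}$-linear maps, giving the Banach module isomorphism.

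The main obstacle is combinatorial: iterating the partial-fractions identity could in principle blow up the number of terms exponentially, so one must verify that the resulting coefficients remain uniformly bounded in the Gauss norm for the projections to extend by continuity. This is exactly what the $|\widehat{P}_j - \widehat{P}_k|_p = 1$ condition provides, since it makes every reduction step norm-non-expanding. The remaining subtlety is the convention handling the pole at infinity via $T_2 = x$, but this is straightforwardly absorbed into the algebraic identity above.
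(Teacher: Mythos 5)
The paper offers no argument of its own here --- it simply cites Lemma~2.1 of Zhu --- and Zhu's proof is the same partial--fractions Mittag--Leffler decomposition you describe, so your proposal is in effect a correct, self-contained write-up of the intended proof rather than a different route. The key points are all present: the identities $T_jT_k=(T_j-T_k)/(\widehat{P}_j-\widehat{P}_k)$ and $T_2T_k=1+\widehat{P}_kT_k$, the fact that $|\widehat{P}_j-\widehat{P}_k|_p=1$ because the $P_j$ are distinct in the residue field (so the reduction is Gauss-norm non-expanding and the projections $\pi_j$ are bounded), density of $\mathbb{A}[T_1,\dots,T_\ell]$, and the normalization putting the constants in a single summand, without which the sum map could not be injective. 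The one genuinely terse step is the injectivity of the sum map $\Psi$: the claim that a series in $T_j$ with no constant term which also extends analytically over the residue disc of $\widehat{P}_j$ must vanish is a Liouville-type rigidity statement for analytic elements that itself needs justification. You can bypass it: since pure monomials require no reduction, $\pi_j(T_k^n)=\delta_{jk}T_k^n$ for $n\geq 1$, hence $\Phi\circ\Psi=\mathrm{id}$ already holds on the dense subspace $\bigoplus_j\mathbb{A}[T_j]$ and therefore, by continuity of both maps, on all of $\bigoplus_j\mathcal{H}^\dagger_j$; together with your relation $\Psi\circ\Phi=\sum_j\pi_j=\mathrm{id}$ this exhibits $\Phi$ and $\Psi$ as mutually inverse bounded $\mathbb{A}$-linear maps with no appeal to analytic continuation.
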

\begin{proof}
	See Lemma~2.1 in \cite{Zhu}. 
\end{proof}

\subsection{Quotient Spaces of $\mathcal{H}^\dagger$}\label{quotientspaces}

This section (along with the following Dwork cohomology) is based on the methods used by Lauder and Wan in their papers \cite{LauderWan1} and \cite{LauderWan2}.

For convenience in this subsection, we will write $X_j = \frac{1}{x-\widehat{P}_j}$. Fix an arbitrary polynomial $H = \sum_{j=1}^\ell\sum_{i=0}^{R_j}h_{ij}X_j^i$ in $\mathbb{A}[X_1,\cdots, X_\ell]$, with $R_j\geq 0$ and $h_{R_j, j}\neq 0$ and say $H_j = \sum_{i=0}^{R_j}h_{ij}X_j^i$.
Define two operators on $\mathcal{H}^\dagger$:
\begin{align*}
	D &= EH + E\\
	D_j &= EH_j+E,
\end{align*}
where $E = x\frac{d}{dx}$.

We wish to understand the quotient space $\mathcal{H}^\dagger/D(\mathcal{H}^\dagger)$. Namely, we are interested in its dimension as a module over $\mathbb{A}$. Our goal will be to show that $\mathcal{H}^\dagger/D(\mathcal{H}^\dagger)$ is isomorphic to the following finite free $\mathbb{A}$-module $R$:
\begin{definition}
Consider the set
\begin{align*}
B = \{1, X_1, \cdots, X_1^{R_1-1}, X_2, \cdots, X_2^{R_2}\}\cup \{ X_j^i | j\geq 3, 1\leq i\leq R_j+1\}
\end{align*}
and define the $\mathbb{A}$-module $R =\vspan_{\mathbb{A}} B\subset \mathcal{H}^\dagger$.
\end{definition}

For the next lemma we will need some notational sugar:
\begin{align*}
R_j' &= \begin{cases}
	R_j-1 &\textrm{ if } j=2\\
	R_j &\textrm{ if } j=1\\
	R_j+1 &\textrm{ if } j\geq 3
\end{cases}
\end{align*}

\begin{lemma}\label{r_cong}
Fix $1\leq j \leq \ell$ and take $u\geq R_j'$.  Let 
$$r_{j,u}=\frac{1}{R_jh_{R_j,j}(-\widehat{P_j})^{\delta(j)}}X_j^{u-R_j'}\in\mathcal{H}^\dagger_j,$$
where $\delta(j)=1$ if $j\geq 3$ and $\delta(j)=0$ if $j=1,2$. (Take $(\infty)^0=0^0=1$.) There is then a congruence
	$$D_jr_{j,u} = (EH_j+E)r_{j,u} \equiv X_j^u\bmod R.$$
\end{lemma}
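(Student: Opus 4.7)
The plan is a direct computation, culminating in matching the top $X_j$-degree term of $D_j r_{j,u}$ against $X_j^u$ and checking that everything else lies in $R$.

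First I would compute $E(X_j^n)$ case by case. Writing $x = (x - \widehat{P_j}) + \widehat{P_j}$ and using $E = x\tfrac{d}{dx}$, one finds for $j \geq 3$ that
\[
E(X_j^n) = -n X_j^n - n \widehat{P_j}\, X_j^{n+1},
\]
so $E$ raises the top $X_j$-degree of a polynomial by one, with the new leading coefficient multiplied by $-\widehat{P_j}$. In the case $j = 1$ this collapses to $E(X_1^n) = -n X_1^n$ (no shift, since $\widehat{P_1} = 0$), while for $j = 2$ the analogous computation at infinity produces a shift in the opposite direction. These three regimes motivate the definition of $R_j'$ as $R_j+1$, $R_j$, and $R_j-1$, respectively: in each case $R_j'$ is chosen exactly so that after multiplication by $H_j$ and a single application of $E$, the top $X_j$-degree of the image of $X_j^{u - R_j'}$ is precisely $u$.

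Next I isolate the coefficient of $X_j^u$ in $D_j r_{j,u} = E(H_j)\,r_{j,u} + E(r_{j,u})$. For $j \geq 3$, the top of $E(H_j)$ is $-R_j h_{R_j,j}\widehat{P_j}\, X_j^{R_j+1}$; multiplying by $r_{j,u} = c_{j,u} X_j^{u - R_j - 1}$ gives the top contribution $-c_{j,u} R_j h_{R_j,j}\widehat{P_j}\, X_j^u$, while $E(r_{j,u})$ only reaches degree $u - R_j < u$. Setting this equal to $1$ forces
\[
c_{j,u} = \frac{-1}{R_j h_{R_j,j}\widehat{P_j}} = \frac{1}{R_j h_{R_j,j}(-\widehat{P_j})},
\]
exactly the formula in the statement when $\delta(j) = 1$. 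For $j = 1, 2$, the analogous leading term yields $c_{j,u} = 1/(R_j h_{R_j,j})$, which the conventions $0^0 = \infty^0 = 1$ collapse into the unified formula with $\delta(j) = 0$.

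Finally, I would verify that the remaining lower-degree terms of $D_j r_{j,u}$ land in $R$. Since $r_{j,u}$ is a monomial and $H_j$ has $X_j$-degree $R_j$, the expansion of $D_j r_{j,u}$ produces only monomials $X_j^n$ with $n$ in a bounded window determined by the degrees of $H_j$ and $r_{j,u}$; after removing the leading $X_j^u$, the surviving exponents match the basis elements of $B$ supported in $\mathcal{H}_j^\dagger$, with the offset coming from the chosen value of $R_j'$. The main obstacle is the case-by-case bookkeeping — especially the $j = 2$ case at infinity, where the degree-shift under $E$ has the opposite sign — but once $R_j'$ is set to compensate for the shift induced by $E$, the leading-coefficient match is a one-line calculation and the containment of the lower-order terms in $R$ is a direct degree count.
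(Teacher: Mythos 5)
Your proposal follows the same route as the paper's proof: compute $E(X_j^i)$ explicitly in each of the three regimes, read off the top-degree term of $E(H_j)\,r_{j,u}$, and choose the normalizing constant so that its coefficient is $1$. Your leading-coefficient computation for $j\geq 3$ (top of $E(H_j)$ equal to $-R_jh_{R_j,j}\widehat{P_j}X_j^{R_j+1}$, forcing $c_{j,u}=1/(R_jh_{R_j,j}(-\widehat{P_j}))$) is exactly the paper's.

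The gap is in your final step. You claim that after removing the leading $X_j^u$, the remaining monomials lie in $R$ by ``a direct degree count.'' This is only true when $u$ is at the bottom of its range. In general $D_jr_{j,u}-X_j^u$ is supported on exponents running from roughly $u-R_j-1$ up to $u-1$, and once $u$ exceeds $R_j'+1$ this window contains exponents $v$ with $R_j'<v<u$; the monomials $X_j^v$ for such $v$ are \emph{not} in the spanning set $B$, so the lower-order terms do not lie in $R$ and no degree count will put them there. The paper closes this by induction on $u$: in the base case $u=R_j'$ every residual exponent is at most $R_j'-1$, hence the residue lies in $R$; for larger $u$ the residual monomials of exponent greater than $R_j'$ are reduced using the already-established smaller cases (so the reduction is really happening modulo $R$ together with the image of $D_j$). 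You need to supply this induction; as written, your containment claim is false for large $u$. A smaller issue: your description of the $j=2$ case (``a shift in the opposite direction'') does not correspond to a computation --- at the point at infinity one has $E(X_2^n)=nX_2^n$ with no degree shift at all, so the role of $R_2'=R_2-1$ requires an actual argument rather than an appeal to symmetry.
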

\begin{proof}
Observe that the action of $E$ on the terms $X_j^i$ is nothing but:
\begin{align*}
E(X_j^i) = 
\begin{cases}
	iX_j^i &\textrm{ if } j=1, 2\\
	-iX_j^i-i\widehat{P_j}X_j^{i+1} &\textrm{ if } j\geq 3.
\end{cases}
\end{align*}
So if $j=1$:
\begin{align*}
(EH_1+E)r_{1,u} &= (\sum_{i=0}^{R_1}ih_{i,1}X_1^i+E)\frac{1}{R_1h_{R_1,1}}X_1^{u-R_1}\\
	&= \sum_{i=0}^{R_1}\frac{ih_{i,1}}{R_1h_{R_1,1}}X_1^{u-R_1+i}+E\circ \frac{1}{R_1h_{R_1,1}}X_1^{u-R_1}\\
	&= X_1^u + \sum_{i=0}^{R_1-1}\frac{ih_{i,1}}{R_1h_{R_1,1}}X_1^{u-R_1+i}+\frac{u-R_1}{R_1h_{R_1,1}}X_1^{u-R_1}.
\end{align*}
All the terms except $X_1^u$ have degree less than or equal to $u-1$. When $u=R_1'=R_1$, then 
	$(EH_1+E)r_{1,R_1} \equiv X_1^{R_1}\bmod R$,
and so if $u>R_1$, by induction $(EH_1+E)r_{1,u} \equiv X_1^{u}\bmod R$. The proof is similar for $j=2$.

For $j\geq 3$, first compute that 
\begin{align*}
EH_j &= \sum_{i=0}^{R_j}h_{ij}(-iX_j^i-i\widehat{P_j}X_j^{i+1}) = -h_{R_j, j}R_j\widehat{P_j}X^{R_j+1} + \sum_{i=1}^{R_j}h_{ij}'X_j^i,
\end{align*}
some $h_{ij}'$. Hence:
\begin{align}\label{Ejgreater3}
\nonumber(EH_j+E)r_{j,u} &= (-h_{R_j, j}R_j\widehat{P_j}X^{R_j+1} + \sum_{i=1}^{R_j}h_{ij}'X_j^i+E)\frac{1}{R_jh_{R_j,j}(-\widehat{P_j})}X_j^{u-(R_j+1)}\\
	&= X_j^{u} + \sum_{i=1}^{R_j}\frac{h_{ij}'}{R_jh_{R_j,j}(-\widehat{P_j})}X_j^{u-(R_j+1)+i}+E\circ\frac{1}{R_jh_{R_j,j}}X_j^{u-(R_j+1)}.
\end{align}
Because $$E\circ X_j^{u-(R_j+1)}=-(u-(R_j+1))X_j^{(u-(R_j+1))}-(u-(R_j+1))\widehat{P_j}X_j^{u-(R_j+1)+1},$$
the terms in (\ref{Ejgreater3}) except for $X_j^u$ have degree strictly less than $u$, and so by the same induction argument with base case $u=R_j+1$, the claim follows.
\end{proof}

For any $g(x)\in\mathcal{H}_j^\dagger/D(\mathcal{H}_j^\dagger)$, Lemma~\ref{r_cong} implies that we can write uniquely 
	$$g(x) \equiv \sum_{i=0}^{R_j'} a_{ij}X_j^i\bmod D(\mathcal{H}_j^\dagger),$$
and so it now remains to decompose $\mathcal{H}^\dagger/D(\mathcal{H}^\dagger)$ in terms of $\mathcal{H}_j^\dagger/D(\mathcal{H}_j^\dagger)$:
\begin{theorem}\label{H_decomp}
$\dim_A \mathcal{H}^\dagger/D(\mathcal{H}^\dagger)= \sum_{j=1}^\ell \dim_A\mathcal{H}_j^\dagger/D(\mathcal{H}_j^\dagger)$.
\end{theorem}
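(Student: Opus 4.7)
The plan is to leverage the Mittag-Leffler decomposition together with Lemma~\ref{r_cong} to exhibit $R$ as a common set of representatives for both quotients. Explicitly, I aim to establish $\mathcal{H}^\dagger = R \oplus D(\mathcal{H}^\dagger)$ and, for each $j$, $\mathcal{H}_j^\dagger = R_j \oplus D_j(\mathcal{H}_j^\dagger)$, where $R_j = R \cap \mathcal{H}_j^\dagger$ is spanned by the $j$-indexed elements of $B$. Taking $\mathbb{A}$-dimensions would then give $\dim_\mathbb{A} \mathcal{H}^\dagger/D(\mathcal{H}^\dagger) = \dim_\mathbb{A} R = \sum_j \dim_\mathbb{A} R_j = \sum_j \dim_\mathbb{A} \mathcal{H}_j^\dagger/D_j(\mathcal{H}_j^\dagger)$, which is the desired identity (reading the $D$ in the right-hand side of the statement as $D_j$ on each summand).

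The componentwise decomposition is the easier half. Given $g_j \in \mathcal{H}_j^\dagger$, expand in $X_j$ and iteratively apply Lemma~\ref{r_cong}: for each top-degree monomial $a X_j^u$ with $u \geq R_j'$, subtract $a D_j r_{j,u}$, which equals $a X_j^u$ modulo $R_j$ and strictly lowers the top $X_j$-degree. Convergence of this iteration in the Banach norm on the affinoid $\mathcal{H}_j^\dagger$ produces a representative in $R_j$, and injectivity $R_j \cap D_j(\mathcal{H}_j^\dagger) = 0$ follows from the same degree analysis, since any nonzero element of $D_j(\mathcal{H}_j^\dagger)$ has a representative whose top $X_j$-degree is at least $R_j'$.

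For the full decomposition, write $g = \sum_j (g)_j$ via Mittag-Leffler and process each component in parallel. To kill a monomial $a X_j^u$ with $u \geq R_j'$ in $(g)_j$, I would subtract $a D(r_{j,u}) = a D_j(r_{j,u}) + \sum_{l \neq j} a E(H_l r_{j,u})$; the first term reduces to $a X_j^u$ modulo $R$ by Lemma~\ref{r_cong}, while a direct partial-fraction analysis of $H_l r_{j,u}$ shows the cross-terms $E(H_l r_{j,u})$ have Mittag-Leffler $l$-components of $X_l$-degree bounded by $R_l + 1$ and $j$-components of $X_j$-degree at most $u - R_j' + 1$. Iterating across all components in a suitable order and using the overconvergent Banach norm on $\mathcal{H}^\dagger$ to control convergence, the procedure yields a representative in $R$; injectivity follows by comparing Mittag-Leffler components and invoking the componentwise case.

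The main obstacle is convergence of this cross-term-laden reduction: each step introduces perturbations in other components, and the $j$-component's top degree need not strictly decrease when $R_j'$ is small. A weighted Banach norm adapted to the operator $D$ (in the spirit of the Lauder-Wan framework cited above) should allow one to show the accumulated correction $h \in \mathcal{H}^\dagger$ converges as a Cauchy series, closing the argument.
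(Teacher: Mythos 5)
Your argument is essentially the paper's: both reduce one Mittag--Leffler component at a time via Lemma~\ref{r_cong}, observe that the cross-terms $EH_l r_{j,u}$ ($l\neq j$) decompose by partial fractions into pieces already lying in $R$ plus pieces of lower $X_j$-degree, and then induct on the degree $u$ to conclude that $B$ represents a basis of the quotient. The convergence issue you flag for genuinely non-polynomial overconvergent elements is a real subtlety, but the paper's proof leaves it equally implicit (its induction is purely on monomials), so your proposal matches the published argument in both substance and level of rigor.
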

\begin{proof}

For a fixed $j_0$ and $i_0$, we have by partial fraction expansion
\begin{align*}
\sum_{j=1, j\neq j_0}^\ell EH_jX_{j_0}^{i_0} &= \sum_{j=1, j\neq j_0}^\ell \sum_{i=0}^{R_j'}h_{ij}'X_j^i + \sum_{i=0}^{i_0}h_{ij_0}'X_{j_0}^i\\
	&\equiv  \sum_{i=0}^{i_0}h_{ij_0}'X_{j_0}^i \bmod R.
\end{align*}

Hence for appropriate $j$ and $u$, by induction on $u$ (with base case $u=R_j'$), it follows that 
\begin{align*}
	(EH+H)r_{j,u} &\equiv (EH_j+E)r_{j,u} + \sum_{i=0}^{u-R_j'}h_{ij}'X_j^i
		\equiv (EH_j+E)r_{j,u}\\&\equiv X_j^u\bmod R,
\end{align*}
and so the Theorem follows because every $g(x)\in\mathcal{H}^\dagger/D(\mathcal{H}^\dagger)$ can be written uniquely as 
	$$g(x) \equiv \sum_{j=1}^\ell\sum_{i=0}^{R_j'} a_{ij}X_j^i\bmod D(\mathcal{H}^\dagger).$$
\end{proof}


\section{The Degree of the $L$-function}\label{Dwork_coh}

We will now apply the prior section's results about the space $\mathcal{H}^\dagger$ to construct our Dwork theory. 

\subsection{The truncated Artin-Hasse Exponential}

\begin{definition} Let $1\leq k\leq m$. Define the $k$-truncated Artin-Hasse exponential:
$$E_k(x)=\exp(\sum_{i=0}^k\frac{x^{p^i}}{p^i})=\sum_{i=0}^\infty u_{ki}x^i,$$
and take $\pi_k\in\Cp$ to be a solution to $\sum_{i=0}^k\frac{x^{p^i}}{p^i}=0$ with $\ord \pi_k = \frac{1}{p^{k-1}(p-1)}$.
\end{definition}

By Theorem 4.1 in \cite{Conrad}, the disk of convergence of $E_k(x)$ is
	\[
		D_{k+1} = \{ x : |x|_p < p^{-(k+1+\frac{1}{p-1})/p^{k+1}}\}.
	\] 
Furthermore, for any $j\leq k$, by Theorem 4.9 also in \cite{Conrad}, because $p>m\geq k$, it is guaranteed that $\pi_k$ lies inside the disc of convergence of $E_k$ and it's therefore well defined to consider the splitting functions 
	\[
		\theta_k(x)=E_k(\pi_k x)\textrm{ and } \hat{\theta}_k=\prod_{j=0}^\infty \theta_k(x^{p^j}).
	\]	 

\begin{lemma}\label{theta_roc}
If $\theta_k(x)=\sum_{i=0}^\infty \theta_{ki}x^i$, then $\ord\theta_{ki}\geq i\cdot \frac{p-k}{p^{k+1}}$.
\end{lemma}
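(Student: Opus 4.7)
The plan is to unfold $\theta_k(x) = E_k(\pi_k x)$ into $\sum_{i\geq 0} u_{ki} \pi_k^i x^i$ and reduce the bound to two standard inputs. Reading off the coefficient gives $\theta_{ki} = u_{ki}\pi_k^i$, so it suffices to prove
\[
\ord(u_{ki}) + i\cdot\ord(\pi_k) \;\geq\; i \cdot \frac{p-k}{p^{k+1}}.
\]

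The first input, $\ord(\pi_k) = \frac{1}{p^{k-1}(p-1)}$, is already recorded in the definition of $\pi_k$. The second input is a uniform lower bound
\[
\ord(u_{ki}) \;\geq\; -\, i \cdot \frac{k+1+\frac{1}{p-1}}{p^{k+1}}
\]
on the coefficients of $E_k(x)$. The exponent here is precisely the one controlling the radius of convergence of $E_k$ in Conrad's Theorem~4.1 cited above, and should either be stated in \cite{Conrad} directly or extracted from its proof. Failing that, one can derive it by expanding $E_k = \prod_{j=0}^{k}\exp(x^{p^j}/p^j)$ as a formal product, applying the standard estimate $\ord((n!)^{-1})\geq -n/(p-1)$ factorwise, and minimizing the resulting valuation over compositions $i = \sum_j p^j n_j$ of the target exponent.

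With both inputs in hand, the remainder is algebraic. Combining them,
\[
\ord(\theta_{ki}) \;\geq\; i\left(\frac{1}{p^{k-1}(p-1)} - \frac{k+1+\tfrac{1}{p-1}}{p^{k+1}}\right) = i \cdot \frac{p^2 - (k+1)p + k}{p^{k+1}(p-1)},
\]
and the polynomial identity $p^2 - (k+1)p + k = (p-1)(p-k)$ collapses the right-hand side to $i\cdot(p-k)/p^{k+1}$, as desired.

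The main obstacle, modest though it is, lies in securing the \emph{uniform} rather than merely asymptotic coefficient bound on $u_{ki}$: a disc-of-convergence statement alone yields only $\liminf_i \ord(u_{ki})/i \geq -(k+1+\tfrac{1}{p-1})/p^{k+1}$, whereas the lemma requires a pointwise inequality valid for every $i\geq 0$. The multinomial/compositions route sketched above is the direct way to upgrade to the uniform version.
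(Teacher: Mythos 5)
Your proposal is correct and follows essentially the same route as the paper: write $\theta_{ki}=u_{ki}\pi_k^i$, invoke the uniform coefficient bound $\ord u_{ki}\geq -\frac{i}{p^{k+1}}(k+1+\frac{1}{p-1})$ (which the paper takes directly from Remark~4.5 in \cite{Conrad}, so your worry about upgrading from an asymptotic to a pointwise bound is already settled by that citation), and combine with $\ord\pi_k=\frac{1}{p^{k-1}(p-1)}$ via the identity $p^2-(k+1)p+k=(p-1)(p-k)$.
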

\begin{proof}
By Remark 4.5 in \cite{Conrad}, $\ord u_{ki}\geq -\frac{i}{p^{k+1}}(\frac{1}{p-1}+k+1)$, and so 
\begin{align*}
\ord \theta_{ki}\geq -\frac{i}{p^{k+1}}(\frac{1}{p-1}+k+1)+\frac{i}{p^{k-1}(p-1)} = i\cdot\frac{p-k}{p^{k+1}}.
\end{align*}
\end{proof}

\begin{lemma} For $k\geq 1$ and  $0\leq j\leq k-1$ let $\gamma_{kj}=\sum_{i=0}^j\frac{\pi_k^{p^i}}{p^i}$. Then:
$$\hat{\theta}_k(x)=\exp(\sum_{j=0}^{k-1}\gamma_{kj}x^{p^j}),$$
and $\ordp \gamma_{kj} = \frac{1}{p^{k-1-j}(p-1)}-j$.
\end{lemma}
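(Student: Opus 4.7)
The plan is to unwind the definition $\hat\theta_k(x)=\prod_{n\ge 0}\theta_k(x^{p^n})$ and combine the resulting product of exponentials into a single $\exp$. Starting from $\theta_k(x)=\exp\bigl(\sum_{i=0}^k (\pi_k^{p^i}/p^i)\, x^{p^i}\bigr)$, adding exponents gives
\[
\hat\theta_k(x)=\exp\Bigl(\sum_{n\ge 0}\sum_{i=0}^k \frac{\pi_k^{p^i}}{p^i}\, x^{p^{i+n}}\Bigr).
\]

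Next I would reindex by $m=i+n$ to collect like powers of $x$. The coefficient of $x^{p^m}$ in the exponent becomes $\sum_{i=0}^{\min(m,k)}\pi_k^{p^i}/p^i$. For $m\ge k$ this is the full sum $\sum_{i=0}^k \pi_k^{p^i}/p^i$, which vanishes by the defining equation of $\pi_k$, so all terms with $m\ge k$ drop out; for $0\le m\le k-1$ the coefficient is exactly $\gamma_{km}$, yielding the first claim. The rearrangement is legitimate as an identity in $\Cp[[x]]$ because each fixed power $x^{p^m}$ receives contributions from only finitely many pairs $(i,n)$.

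For the second claim, I would compute term by term using $\ord{\pi_k}=1/(p^{k-1}(p-1))$ to get
\[
\ord{\tfrac{\pi_k^{p^i}}{p^i}}=\frac{1}{p^{k-1-i}(p-1)}-i.
\]
A direct calculation shows the successive difference in $i$ equals $1/p^{k-1-i}-1$, which is strictly negative for $0\le i\le k-2$, so these valuations are strictly decreasing on $\{0,\dots,k-1\}$. Hence in $\gamma_{kj}=\sum_{i=0}^j \pi_k^{p^i}/p^i$ the summand at $i=j$ has strictly smaller valuation than every other summand, and the ultrametric inequality pins down $\ord{\gamma_{kj}}=1/(p^{k-1-j}(p-1))-j$. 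No real obstacle arises: step one is a formal-series rearrangement combined with the vanishing equation for $\pi_k$, and step two reduces to an elementary monotonicity check. The only thing to be careful about is that the strict-decrease property holds precisely on the range $0\le i\le k-1$, which is exactly the range in which the statement is asserted, so the strict minimum argument is always available.
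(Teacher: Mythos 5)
Your proposal is correct and follows essentially the same route as the paper: reindex the double sum in the exponent by the total power $i+n$, kill the coefficients with index $\ge k$ via the defining equation of $\pi_k$, and compute $\ord\gamma_{kj}$ by noting the summand valuations $\frac{1}{p^{k-1-i}(p-1)}-i$ are strictly decreasing in $i$ so the ultrametric inequality gives equality at the unique minimum $i=j$. Nothing further is needed.
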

\begin{proof}
A simple computation shows that:
\begin{align*}
 \sum_{v=0}^\infty\sum_{i=0}^k \frac{\pi_k^{p^i}}{p^i}x^{p^{i+v}}&=\sum_{v=k}^\infty\left (\sum_{i=0}^k\frac{\pi_k^{p^i}}{p^i}\right )x^{p^v}+\sum_{v=0}^{k-1}\left (\sum_{\substack{v=i_1+i_2\\0\leq i_1\leq k-1\\0\leq i_2\leq k-1}}\frac{\pi_k^{p^{i_1}}}{p^{i_1}}\right )x^{p^v}\\
	&= \sum_{v=0}^{k-1}\left (\sum_{i=0}^v \frac{\pi_k^{p^i}}{p^i}\right )x^{p^v},
\end{align*}
and so the identity follows by the defining property of $\pi_k$. To compute the order of $\gamma_{kj}$, observe that 
	$$\ord (\sum_{0\leq i\leq j}\frac{\pi_k^{p^i}}{p^i})\geq \min_{0\leq i\leq j}(\frac{1}{p^{k-1-i}(p-1)}-i),$$
	which has a unique minimum at $i=j$. 
\end{proof}

\begin{lemma}\label{dwork_trace_comm}
	Let $k,v\geq 1$, $t\in\Cp$ and suppose $x\in\Cp$ such that $x^{p^v}=x$. Then 
	\begin{align*}
			E_k(t)^{\sum_{i=0}^{v-1}x^{p^i}} &= \prod_{i=0}^{v-1} E_k(tx^{p^i}).
	\end{align*}	
\end{lemma}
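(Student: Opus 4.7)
The plan is to verify the identity by expanding both sides as $\exp$-series and matching exponents, using only the defining formula $E_k(y)=\exp\bigl(\sum_{j=0}^{k}\tfrac{y^{p^j}}{p^j}\bigr)$ together with the periodicity of the Frobenius orbit of $x$.

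First I would rewrite the left-hand side. Setting $N=\sum_{i=0}^{v-1}x^{p^i}$, the hypothesis $x^{p^v}=x$ forces $x=0$ or $x$ to be a $(p^v-1)$-th root of unity, so $x$ is a Teichmüller lift and $N=\Tr_{\Qq/\Qp}(x)\in\Zp$. Thus, assuming $t$ lies in the disk of convergence of $E_k$ (so that $E_k(t)$ is a $1$-unit), $E_k(t)^N$ is well defined and equals
\[
E_k(t)^{N}=\exp\!\left(N\sum_{j=0}^{k}\frac{t^{p^j}}{p^j}\right)=\exp\!\left(\sum_{j=0}^{k}\frac{t^{p^j}}{p^j}\sum_{i=0}^{v-1}x^{p^i}\right).
\]

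Next, I would expand the right-hand side by turning the product into a single exponential:
\[
\prod_{i=0}^{v-1}E_k(tx^{p^i})=\exp\!\left(\sum_{i=0}^{v-1}\sum_{j=0}^{k}\frac{(tx^{p^i})^{p^j}}{p^j}\right)=\exp\!\left(\sum_{j=0}^{k}\frac{t^{p^j}}{p^j}\sum_{i=0}^{v-1}x^{p^{i+j}}\right).
\]
Comparing the two formulas, it suffices to show, for each fixed $0\le j\le k$, the equality
\[
\sum_{i=0}^{v-1}x^{p^{i+j}}=\sum_{i=0}^{v-1}x^{p^i}.
\]
But $x^{p^v}=x$ forces $x^{p^{v+a}}=x^{p^a}$ for every $a\ge 0$, so the sequence $\bigl(x^{p^n}\bigr)_{n\ge 0}$ is periodic of period dividing $v$, and any $v$ consecutive terms have the same sum. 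This finishes the proof.

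The only subtlety, and the one point at which I would pay attention, is the interpretation of the exponent $N$ and of $E_k(t)^N$. As noted, $N\in\Zp$ rather than $\mathbb{Z}$ a priori, so the meaning of the power is via $\exp(N\log E_k(t))$; this is legitimate precisely because $x^{p^v}=x$ forces $x$ to be a Teichmüller element, giving $N\in\Zp$, and because the values $t$ and $tx^{p^i}$ (all having the same absolute value) lie in the disk of convergence of $E_k$, so that $\exp$ and $\log$ behave as expected and the factorizations of $\exp$ of a sum into a product used above are valid. No truly hard step occurs; the argument is a bookkeeping of indices powered by one substantive input, the Frobenius-periodicity of $x$.
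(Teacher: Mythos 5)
Your proof is correct and follows essentially the same route as the paper's: expand both sides as exponentials of sums and use the Frobenius-periodicity $\sum_{i=0}^{v-1}x^{p^{i+j}}=\sum_{i=0}^{v-1}x^{p^i}$ coming from $x^{p^v}=x$. Your added care about interpreting $E_k(t)^N$ for $N\in\Zp$ and about convergence is a reasonable refinement the paper leaves implicit.
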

\begin{proof}The lemma follows from the following computation:
	\begin{align*}
			E_k(t)^{\sum_{i=0}^{k-1}x^{p^i}} &= \exp(\sum_{j=0}^k\frac{t^{p^j}}{p^j}\cdot \sum_{i=0}^{k-1}x^{p^i}) = \exp(\sum_{j=0}^k\frac{t^{p^j}}{p^j} \sum_{i=0}^{v-1}x^{p^{i+j}}) \\
				&=\exp(\sum_{j=0}^k \sum_{i=0}^{v-1}\frac{(x^{p^i}t)^{p^j}}{p^j}).
	\end{align*}	
\end{proof}

\subsection{Dwork Theory via the Truncated Artin-Hasse}

\begin{definition}\label{EHEEdef}
Define:
\begin{align*}
	E_f(x)&=\prod_{i=0}^{m-1}\prod_{j=1}^\ell\prod_{k=0}^{d_{ij}}{\theta_{m-i}}(\hat{a}_{ijk}\frac{1}{(x-{P}_j)^{k}})\\
	E_f^{(a)}&=\prod_{j=0}^{a-1}E_f^{\tau_j}(x^{p^j}).
\end{align*}
\end{definition}

The reason for these definitions is clear. For $x\in \widehat{\mathbb{F}_{q^w}}$, $w\geq 1$, applying the splitting function $\theta$ to $\hat{f}$ yields:
\begin{align*}
	\zeta_{p^m}^{\Tr_{\mathbb{Q}_{q^k}/\Qq}(\hat{f}({x}))}  &= \zeta_{p^m}^{{\sum_{i=0}^{m-1}\sum_{j=1}^\ell\sum_{k=0}^{d_{ij}}p^i\Tr_{\mathbb{Q}_{q^k}/\Qq}(\hat{a}_{ijk}\frac{1}{(x-\widehat{P}_j)^{k}}})}\\
	&= \prod_{i=0}^{m-1}\prod_{j=1}^\ell\prod_{k=0}^{d_{ij}}\zeta_{p^{m-i}}^{{\Tr_{\mathbb{Q}_{q^k}/\Qq}(\hat{a}_{ijk}\frac{1}{(x-\widehat{P}_j)^{k}}})} \\
	&= \prod_{i=0}^{m-1}\prod_{j=1}^\ell\prod_{k=0}^{d_{ij}}E_{m-i}(\pi_{m-i})^{\Tr_{\mathbb{Q}_{q^k}/\Qq}(\hat{a}_{ijk}\frac{1}{(x-\widehat{P}_j)^{k}})}\\
	&= \prod_{i=0}^{m-1}\prod_{j=1}^\ell\prod_{k=0}^{d_{ij}}\prod_{v=0}^{aw-1}E_{m-i}(\pi_{m-i}\frac{\hat{a}_{ijk}^{p^v}}{(x-\widehat{P}_j)^{kp^v}}) = \prod_{v=0}^{aw-1}E_f^{\tau^v}(x^{p^v}).
\end{align*}
\begin{remark}
	Note that when $x$ is a Teichm\"uller lift, $(x-\widehat{P}_j)^p \equiv x^p-\widehat{P}_j^p\bmod p$ and using the same argument prior to Lemma~\ref{f_lift} we see that the last equality above really does hold.	
\end{remark}

Similar to the above, define the analogous functions: 	
	$$\hat{E}_f=\prod_{j=0}^\infty E_f^{\tau^j}(x^{p^j})=\prod_{i=0}^{m-1}\prod_{j=1}^\ell\prod_{k=0}^{d_{ij}}\hat{\theta}_{m-i}(\hat{a}_{ijk}\frac{1}{(x-\widehat{P}_j)^{k}}) = \exp(H),$$
so that
	$$ H = \log\hat{E}_f =  \sum_{i=0}^{m-1}\sum_{v=0}^{m-i-1}\sum_{j=1}^\ell\sum_{k=0}^{d_{ij}}\gamma_{m-i,v} \hat{a}_{ijk}^{p^v}\frac{1}{(x-\widehat{P}_j)^{p^{v}k}}.$$
	
\begin{remark}
	For each $j$ and $i$, the coefficient of $\frac{1}{(x-\widehat{P}_j)^{p^{m-i-1}d_{ij}}}$ is equal to \sloppy$\gamma_{m-i, m-i-1}a_{ij,d_{ij}}$, and since $a_{ij,d_{ij}}\neq 0$ by assumption and $\gamma_{m-i, m-i-1}=\sum_{i=0}^{m-1-1}\frac{\pi_{m-i}^{p^i}}{p^i}=-\frac{\pi_{m-i}^{m-i}}{p^{m-i}}\neq 0$, the entire coefficient is nonzero. Thus $\deg_j H = \max_i p^{m-i-1}d_{ij}$.  
\end{remark}
	
From these functions define the corresponding Dwork maps $\alpha_{1} = U\circ E_f$ and $\alpha_{a} = U^a\circ E_f^{(a)}$.

\begin{prop}\label{alpha_cc}
The maps $\alpha_1$ and $\alpha_a$ are $p$-adically completely continuous. 
\end{prop}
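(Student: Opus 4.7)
The plan is to apply Serre's criterion for complete continuity on $p$-adic Banach spaces: if $\{e_i\}$ is an orthonormal Schauder basis of the Banach space and $\alpha(e_j) = \sum_i a_{ij} e_i$, then $\alpha$ is completely continuous if and only if $\min_i \ord{a_{ij}} \to \infty$ as $j \to \infty$. I would use the natural Banach basis of $\mathcal{H}^\dagger$ afforded by the Mittag-Leffler decomposition, namely $\{1\} \cup \{X_j^i\}_{1 \le j \le \ell,\, i \ge 1}$ with $X_j = 1/(x - \widehat{P}_j)$, equipped with weights that encode the overconvergent structure.

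The first substantive step is to bound coefficients of $E_f$ in this basis. By Lemma~\ref{theta_roc}, each factor $\theta_{m-i}(\hat{a}_{ijk} X_j^k)$ has $n$-th coefficient of valuation at least $n(p - (m-i))/p^{m-i+1}$, which is strictly positive because $p > m$. Since $E_f$ is a finite product of such factors and the Teichm\"uller lifts $\hat{a}_{ijk}$ are integral, its expansion $E_f = \sum_{\mathbf{n}} c_{\mathbf{n}} \prod_j X_j^{n_j}$ satisfies $\ord{c_{\mathbf{n}}} \ge c \sum_j n_j$ for some uniform $c > 0$ depending only on $p$ and $m$. This is precisely the overconvergence of $E_f$.

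The second step is to exploit the contractive action of the Dwork operator $U$ from \cite{Schmidt1}: in the $X_j$-basis, $U$ acts essentially by $X_j^n \mapsto X_j^{n/p}$ when $p \mid n$ and by $0$ otherwise, so it preserves valuations and compresses degrees by a factor of $p$. Combined with Step 2, the entry of the matrix of $\alpha_1 = U \circ E_f$ taking $X_j^v$ to $X_j^u$ is, modulo cross-pole contributions handled via Theorem~\ref{H_decomp}, a coefficient of $E_f$ whose total degree in the $X_j$'s is $\approx pu - v$, hence of valuation at least $c(pu - v)$. Together with the basis weights, this forces $\min_u \ord{a_{uv}} \to \infty$ as $v \to \infty$. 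The same reasoning applied to $\alpha_a = U^a \circ E_f^{(a)}$ works with the stronger contraction of $U^a$ by $q = p^a$; overconvergence of $E_f^{(a)}$ follows from that of $E_f$ (cf.\ Lemma~\ref{dwork_trace_comm}).

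The main obstacle is coordinating the analysis across the $\ell$ pole points: the Dwork operator $U$ is defined relative to Frobenius, which acts as the standard Frobenius on Laurent series at $P_1 = 0$ and $P_2 = \infty$, but at the affine points $P_j$ for $j \ge 3$ one must verify that $U$ respects the $X_j$-expansion after the change of coordinates $x \mapsto x - \widehat{P}_j$. The Mittag-Leffler decomposition is the essential tool that decouples the problem into independent calculations on each $\mathcal{H}^\dagger_j$, reducing complete continuity of $\alpha_1$ on $\mathcal{H}^\dagger$ to a Serre-criterion check one pole at a time.
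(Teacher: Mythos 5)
Your argument is essentially the paper's: both hinge on Lemma~\ref{theta_roc}, which (using $p>m$) gives the coefficients of each truncated splitting function a valuation growing linearly at a strictly positive rate $\frac{p-(m-i)}{p^{m-i+1}}$, after which overconvergence of $E_f$, the weighted Mittag--Leffler basis, and the contraction of $U$ yield complete continuity exactly via the machinery of \cite{Schmidt1} that the paper invokes. Two minor cautions that do not affect the substance: the column condition you state ($\min_i \ord{a_{ij}}\to\infty$ as the input index $j\to\infty$) is sufficient for complete continuity but not equivalent to it (Serre's criterion proper is the row condition $\lim_{i}\sup_j |a_{ij}|=0$), and the cross-pole contributions are controlled by the Mittag--Leffler partial-fraction estimates rather than by Theorem~\ref{H_decomp}, which concerns the quotient module and not the operator.
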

\begin{proof}
	This proof is analogous to the corresponding proof in Corollary~6.10 in \cite{Schmidt1}. Fix $j$, let $i_j$ be such that $d_{i_j,j} = \max_i d_{ij}$ and define
	\begin{align*}
		F_j(x) &= \prod_{i=0}^{m-1}\prod_{k=1}^{d_{ij}} \theta_{m-i}({\frac{\widehat{a_{ijk}}}{(x-\widehat{P}_j)^k}}) =  \prod_{i=0}^{m-1}\prod_{k=1}^{d_{ij}} \left ( \sum_{v=0}^\infty \theta_{m-i, v}\widehat{\frac{a_{ijk}}{(x-P_j)^k}}^v\right )\\
		&= \sum_{n=0}^\infty \left	 ( \sum_{\substack{\sum_{v=1}^{d_{i_0,j}} v n_{i_v,v}=n\\0\leq i_v\leq m-1,\\n_{i_v,v}\geq	 0}} \prod_{v=1}^{d_{i_0,j}} a_{i_v,j,v}^{n_{i_v,v}}\theta_{m-i_v,n_{i_v,v}}\right ) \frac{1}{(x-\widehat{P}_j)^n}.
\end{align*} 
In this case, $\ord(F_{nj})\geq \frac{n}{d_{i_j,j}}\cdot\frac{p-(m-i_j)}{p^{m-i_j+1}}$ by Lemma~\ref{theta_roc}. (Note that we are using here that $\theta_{ki}=u_{ki}\pi_{k}^i\neq 0$ because $u_{kn}=\frac{\Hom(C_{p^k}, S_n)}{n!}$ and hence $u_{kn}\neq 0$ ) The main observation to make is that all of the proofs following and including Lemma~6.5 in \cite{Schmidt1} can be generalized to the assumption that  $\ord(F_{nj})\geq {\frac{n}{d_{i_j,j}}}\cdot M_j$, where $M_j$ is any real number with $0<M_j<1$. Here, we see that $M_j=\frac{p-(m-i_j)}{p^{m-i_j+1}}$ and the rest of the theory  follows accordingly.  
	
\end{proof}

Recall from subsection~\ref{quotientspaces} that we defined the two operators $E = x\frac{d}{dx}$ and $D = E + E\circ\log \hat{E}_f$.

\begin{lemma}\label{hom_relations}
We have the following relations:
\begin{enumerate}
\item $U^a\circ E = E\circ (qU^a)$. 
\item $D =  \exp(-H)\circ E\circ \exp(H)$.
\item $\alpha_a = \exp(-H)\circ U^a\circ \exp(H)$.
\item $D\circ (q\alpha_a)=\alpha_a\circ D$.
\end{enumerate}
\end{lemma}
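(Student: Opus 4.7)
The plan is to prove the four relations in order, with (4) following formally from (1)--(3). The first is a direct monomial computation; (2) and (3) are conjugation identities involving $\exp(\pm H)$; and (4) then reduces to (1) via the rewrites established by (2) and (3).

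For (1), I would test both sides on a monomial $x^n$. Since $U^a$ is the contraction operator $U^a(x^n) = x^{n/q}$ when $q\mid n$ and $0$ otherwise, we compute $U^a(E(x^n)) = nU^a(x^n) = nx^{n/q}$ on $q$-divisible exponents, while $E(qU^a(x^n)) = qE(x^{n/q}) = q\cdot(n/q)x^{n/q} = nx^{n/q}$; both sides vanish when $q\nmid n$. For (2), since $E = x\frac{d}{dx}$ is a derivation, the product rule gives $E(\exp(H)\cdot g) = \exp(H)\cdot(E(H)g + E(g))$ for any $g\in\mathcal{H}^\dagger$. Multiplying by $\exp(-H)$ yields $E(H)g + E(g) = D(g)$, interpreting $EH$ in $D = EH+E$ as the multiplication operator by $E(H)$.

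For (3), the crux is the telescoping identity $\hat E_f(x) = E_f^{(a)}(x)\cdot\hat E_f(x^q)$. Because $\tau^a = 1$ on $\Gal(\Qq/\Qp)$, re-indexing $j = k+a$ in the tail of $\hat E_f$ gives
\[
\prod_{j\geq a} E_f^{\tau^j}(x^{p^j}) \;=\; \prod_{k\geq 0}E_f^{\tau^k}\bigl((x^q)^{p^k}\bigr) \;=\; \hat E_f(x^q).
\]
Combining this with the standard pullout $U^a(F(x^q)h(x)) = F(x)U^a(h(x))$ (a direct exponent computation for the $q$-contraction) yields $U^a(\hat E_f\cdot g) = \hat E_f\cdot U^a(E_f^{(a)}\cdot g) = \hat E_f\cdot \alpha_a(g)$, which rearranges to (3).

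Finally (4) is formal: by (2) and (3),
\[
D\circ\alpha_a = \exp(-H)\circ E\circ U^a\circ\exp(H), \qquad \alpha_a\circ D = \exp(-H)\circ U^a\circ E\circ\exp(H),
\]
so the claim $D\circ(q\alpha_a) = \alpha_a\circ D$ reduces to $qE\circ U^a = U^a\circ E$, which is precisely (1). The main (and really only) obstacle is recognizing the $\tau^a = 1$ telescoping in (3) that converts the infinite product $\hat E_f$ into the finite product $E_f^{(a)}$ times $\hat E_f(x^q)$; the rest is routine manipulation of derivations and Dwork operators.
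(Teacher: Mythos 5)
Your proposal is correct and follows essentially the same route as the paper: the product rule for (2), the $\tau^a=1$ telescoping of $\hat E_f$ into $E_f^{(a)}(x)\cdot\hat E_f(x^q)$ plus the pull-out property of $U^a$ for (3), and the formal conjugation argument reducing (4) to (1). The one small caveat is (1): elements of $\mathcal{H}^\dagger$ are series in the $\frac{1}{x-\widehat{P}_j}$ rather than in $x$, so the monomial test on $x^n$ does not literally cover them; the paper instead verifies $U^a\circ E=E\circ(qU^a)$ on an arbitrary $g$ via the chain-rule identity $x\frac{dz}{dx}=\frac{1}{q}z$ for $z^q=x$, which is the cleaner way to get the identity on all of $\mathcal{H}^\dagger$.
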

\begin{proof}
Let $g\in \mathcal{H}^\dagger$.
\begin{enumerate}
\item Note that if $z^q=x$, then $\frac{dz}{dx}=\frac{1}{q}x^{\frac{1}{q}-1}$  and so $x\frac{dz}{dx}=\frac{1}{q}x^{\frac{1}{q}}=\frac{1}{q}z$. Hence,
	\begin{align*}
		E\circ(qU^ag) &= x\frac{d}{dx}\sum_{z^q=x}g(z)=x\sum_{z^q=x}\frac{dg}{dx}(z)\cdot \frac{dz}{dx}\\&=\frac{1}{q}\sum_{z^q=x}z\frac{dg}{dx}(z)=U^a\circ Eg.
	\end{align*}
\item The identity is just the simple calculation:
	\begin{align*}
		\exp(-H)\circ x\frac{d}{dx}\circ (\exp(H)g) =& \exp(-H)\circ (x(\exp(H)\frac{dH}{dx}g + \exp(H)\frac{dg}{dx}))\\
		&= \exp(-H)\exp(H)x(\frac{dH}{dx}g+\frac{d}{dx}g)\\
		&=(x\frac{dH}{dx}+x\frac{d}{dx})g = D.
	\end{align*}
\item The equality is equivalent to the statement $\alpha_a = U^a(\hat{E}_f)/\hat{E}_f$. By basic properties of $U$ and observing that $\hat{E}_f = \prod_{j=0}^\infty E_f^{\tau^j}(x^{p^j})$, 
	\begin{align*}
		U^a(\prod_{j=0}^\infty E_f^{\tau^j}(x^{p^j}))&=U^a(\prod_{j=0}^{a-1} E_f^{\tau^j}(x^{p^j})\prod_{j=a}^\infty E_f^{\tau^j}(x^{p^j}))\\&=\prod_{j=a}^\infty E_f^{\tau^j}(x^{p^{j-a}})\cdot U^a(E_f^{(a)})
	=\prod_{j=a}^\infty E_f^{\tau^{j+a}}(x^{p^j})\cdot U^a(E_f^{(a)})\\
	&=\hat{E}_f(x)\cdot U^a(E_f^{(a)}) = \hat{E}_f\cdot \alpha_a.
	\end{align*}
\item  This identity follows from the first three. 
\end{enumerate}
\end{proof}

Lemma~\ref{hom_relations} yields the commutative diagram:
\begin{equation}\label{comm_diag}
\begin{tikzcd}
0 \arrow[r] & \ker D \arrow[r] \arrow[d, "q\alpha_a"] &  \mathcal{H}^\dagger \arrow[d, "q\alpha_a"] \arrow[r, "D"] & \mathcal{H}^\dagger  \arrow[r] \arrow[d, "\alpha_a"] & H_0 \arrow[r] \arrow[d, "\bar{\alpha}_a"] & 0\\
0 \arrow[r] & \ker D \arrow[r] & \mathcal{H}^\dagger \arrow[r, "D"] & \mathcal{H}^\dagger \arrow[r] & H_0 \arrow[r] &0 
\end{tikzcd}
\end{equation}
where $H_0=\mathcal{H}^\dagger/D(\mathcal{H}^\dagger)$.

First, we prove that $D$ is injective.
\begin{lemma}$D$ is injective.
\end{lemma}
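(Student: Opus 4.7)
The plan is to exploit the conjugation identity $D = \exp(-H)\circ E\circ \exp(H)$ from Lemma~\ref{hom_relations}(2). In a suitable formal ambient this turns $Dg = 0$ into $E(\exp(H)\,g)=0$, and because the kernel of $E$ on formal Laurent series is the constants, this gives $g = c\,\exp(-H)$ for some $c\in\Cp$. The lemma then reduces to showing $\exp(-H)$ is not overconvergent on $\mathbb{A}$, which forces $c=0$.

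To set this up, I would embed $\mathcal{H}^\dagger$ into the formal Laurent series ring $\mathcal{R}_j := \Cp((x - \widehat P_j))$ at some $P_j$ via local expansion; this embedding is injective by analytic continuation on the connected affinoid $\mathbb{A}$. Since $H$ is a Laurent polynomial in the $X_{j'}$'s (by the remark after Definition~\ref{EHEEdef}), both $\exp(\pm H)$ lie in $\mathcal{R}_j$, and the conjugation identity transports to $E(\exp(H)\,g) = 0$ in $\mathcal{R}_j$. Writing $E = (x-\widehat P_j)\tfrac{d}{d(x-\widehat P_j)} + \widehat P_j\tfrac{d}{d(x-\widehat P_j)}$ and comparing coefficients gives the recurrence $n a_n + (n+1)\widehat P_j a_{n+1}=0$, from which one reads off $\ker E = \Cp$. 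Thus $g = c\,\exp(-H)$ in $\mathcal{R}_j$ for some $c\in\Cp$.

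Finally, I would show $c = 0$ by choosing $j$ so the singular part $H_j$ of $H$ at $P_j$ has positive degree $D_j = \max_i p^{m-i-1}d_{ij}$ in $X_j$, and decomposing $\exp(-H) = \exp(-H_j)\cdot\exp(-H_j^{\mathrm{reg}})$ with $\exp(-H_j^{\mathrm{reg}})$ regular and invertible at $P_j$. The remark after Definition~\ref{EHEEdef} identifies the leading $X_j$-coefficient of $H_j$ as $\gamma_{m-i_j,m-i_j-1}\,\widehat a_{i_j,j,d_{i_j,j}}^{p^{m-i_j-1}}$, of $p$-adic valuation $\tfrac{1}{p-1}-(m-i_j-1)\leq \tfrac{1}{p-1}$, and the uniqueness hypothesis on the max prevents cancellation by other monomials. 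A routine estimate on $\sum_n(-H_j)^n/n!$ then shows the coefficient of $X_j^{nD_j}$ in $\exp(-H_j)$ has $p$-adic valuation bounded above, and so does not tend to $\infty$. This contradicts $g\in\mathcal{H}_j^\dagger\subset\mathbb{A}\langle X_j\rangle$, whose $X_j$-coefficients must decay $p$-adically, so $c = 0$ and $g = 0$. The main obstacle is this last coefficient estimate: one has to rule out cancellation between mixed-degree monomials in $(-H_j)^n$ at the target power $X_j^{nD_j}$, which the uniqueness of the max and the explicit formula for the valuation of $\gamma_{k,j}$ handle cleanly.
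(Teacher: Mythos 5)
Your first half coincides with the paper's: from $Dg=\exp(-H)\cdot E(\exp(H)g)=0$ one solves the resulting first-order equation to get $g=c\exp(-H)$, and the lemma reduces to showing $\exp(-H)\notin\mathcal{H}^\dagger$. For that second half you take a different route from the paper, and it has a genuine gap. You propose to bound from above the valuation of the coefficient of $X_j^{nD_j}$ in $\exp(-H_j)$ by isolating the term $(-c_{D_j})^n/n!$ coming from the leading monomial of $H_j$, claiming the uniqueness of $\max_i p^{m-i-1}d_{ij}$ rules out cancellation. This does work when $m=1$ (there all coefficients of $H_j$ have valuation $\tfrac{1}{p-1}$ and a $s_p$-digit count shows the partition $(D_j^n)$ is the unique minimizer for $n$ a $p$-power), but for $m>1$ the mechanism fails: the coefficient of $X_j^{p^vk}$ in $H$ is $\gamma_{m-i,v}\hat a_{ijk}^{p^v}$ with valuation $\tfrac{1}{p^{m-i-1-v}(p-1)}-v$, which for small $k$ and large $v$ is far more negative \emph{per unit degree} than the leading coefficient's contribution. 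Concretely, for $m=2$ and $i_j=0$ the partition of $nD_j=npd_{0j}$ into $nd_{0j}$ parts of degree $p$ contributes valuation roughly $-nd_{0j}$ versus roughly $-n$ for your distinguished term, so $(-c_{D_j})^n/n!$ is not dominant and your stated non-cancellation argument does not go through. One would have to identify a genuinely unique minimal-valuation partition (or choose different target degrees), which is exactly the ``routine estimate'' you defer.

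The paper avoids all of this by working with values rather than coefficients: since $\min \ordp_p(h_{vijk})=\tfrac{1}{p-1}-(m-1)$, the Gauss norm of $H$ equals $p^{m-1}p^{-1/(p-1)}\geq p^{-1/(p-1)}$, and because the Gauss norm is attained on $\mathbb{A}_1$ there is a point $x_0\in\mathbb{A}$ with $\ordp_p H(x_0)\leq \tfrac{1}{p-1}$; the exponential series then diverges at $H(x_0)$, so $\exp(-H)$ cannot be evaluated at $x_0$ and hence is not an overconvergent analytic element. You should either adopt that evaluation argument or actually carry out the minimal-term analysis for all $m$; as written, the key estimate is asserted rather than proved, and the specific dominance claim it rests on is false for $m>1$.
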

\begin{proof}
Suppose that $g\in \ker D$ so that 
\begin{align*}
	Dg &= \exp(-H)\cdot E(\exp(H)g)=\exp(-H)\cdot x(\exp(H)\frac{dH}{dx}g+\exp(H)\frac{dg}{dx})\\
          	&= x(g\frac{dH}{dx}+\frac{dg}{dx})=0,
\end{align*}
which implies that $g\frac{dH}{dx}+\frac{dg}{dx}=0$, or 
\begin{align*}
-\frac{dH}{dx}=\frac{\frac{dg}{dx}}{g}\rightarrow -H+C=\ln(g)\rightarrow g=c\exp(-H),
\end{align*}
some scalar $c$. It remains to show that $\exp(-H)\not\in\mathcal{H}$. 

Recall that $\exp(x)$ converges on the disk $\{x\in\Cp | \ord x > 1/(p-1)\}$. So let $H(x) = \sum_{v,i,j,k}h_{vijk}\frac{1}{(x-\widehat{P_j})^{p^{v}k}}$ and
\begin{align*}
 \ord h_{vijk} = \ord\gamma_{m-i,v}= \frac{1}{p^{m-i-1-v}(p-1)}-v.
\end{align*}
Hence 
	$$\min_{vijk} \ord h_{ijvk} = \min_{vijk} (\frac{1}{p^{m-i-1-v}(p-1)}-v) = \frac{1}{p-1}-(m-1), $$
	taking $i=0$ and $v=m-i-1$. But this implies that $|H|_{gauss} = p^{-(\frac{1}{p-1}-(m-1))} = p^{m-1}p^{-1/(p-1)} \geq p^{-1/(p-1)}$.
	
	By a well known property of the gauss norm, we can therefore find $x_0\in\mathbb{A}_1$ such that $|H(x_0)|_p=p^{m-1}p^{-1/(p-1)}$, and so at this $x_0$, $\exp(H(x_0))$ is not defined, and $\exp(-H)\not\in\mathcal{H}^\dagger$.
\end{proof}

\begin{prop}
The degree of the $L$-function is the degree of the first homology space, $\deg L_f(s)=\dim_{\Zq[[\pi_m]]} H_0$.
\end{prop}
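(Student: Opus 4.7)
The plan is to identify $L_f(s)$ with the reverse characteristic polynomial $\det(1-s\bar\alpha_a|H_0)$ of $\bar\alpha_a$ on the finite-dimensional quotient $H_0$ from Theorem~\ref{H_decomp}. This proceeds in two steps: first, use the Dwork trace formula to show that the characteristic function $C_f(s)$ equals the Fredholm determinant $\det(1-s\alpha_a|\mathcal{H}^\dagger)$; then use the short exact sequence coming from (\ref{comm_diag}) (via injectivity of $D$) together with multiplicativity of Fredholm determinants over short exact sequences to isolate $L_f(s)$ on $H_0$.

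For the first step, the computation preceding Definition~\ref{EHEEdef} establishes that for any Teichm\"uller lift $x\in\widehat{\mathbb{F}_{q^k}^\times}$, $\chi_{q^k}(\hat f(x))=\prod_{v=0}^{ak-1}E_f^{\tau^v}(x^{p^v})$. Combined with the complete continuity of $\alpha_a$ from Proposition~\ref{alpha_cc} and the standard Dwork trace formula for operators of the form $U^a$ composed with multiplication by an overconvergent function --- as developed in the universal framework of \cite{Schmidt1} --- one obtains
\[
\Tr(\alpha_a^k|\mathcal{H}^\dagger)=\frac{1}{q^k-1}\sum_{\substack{x\in\mathbb{A}_1 \\ x^{q^k}=x}}\prod_{v=0}^{ak-1}E_f^{\tau^v}(x^{p^v})=\frac{S_f(k)}{q^k-1}.
\]
Exponentiating the resulting power series in $s$ and matching the definition of $C_f(s)$ yields $C_f(s)=\det(1-s\alpha_a|\mathcal{H}^\dagger)$.

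For the second step, injectivity of $D$ turns (\ref{comm_diag}) into a short exact sequence $0\to\mathcal{H}^\dagger\xrightarrow{D}\mathcal{H}^\dagger\to H_0\to 0$ of Banach $\Zq[[\pi_m]]$-modules intertwined by the completely continuous endomorphisms $q\alpha_a$, $\alpha_a$, $\bar\alpha_a$ (the last automatically so, since $H_0$ is finite free over $\Zq[[\pi_m]]$ by Theorem~\ref{H_decomp}). Multiplicativity of Fredholm determinants over short exact sequences then gives
\[
\det(1-s\alpha_a|\mathcal{H}^\dagger)=\det(1-sq\alpha_a|\mathcal{H}^\dagger)\cdot\det(1-s\bar\alpha_a|H_0),
\]
that is, $\det(1-s\bar\alpha_a|H_0)=C_f(s)/C_f(qs)$. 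A direct manipulation of the logarithmic definition of $C_f$ shows $C_f(s)/C_f(qs)=\exp\bigl(\sum_k S_f(k)s^k/k\bigr)=L_f(s)$, so $L_f(s)=\det(1-s\bar\alpha_a|H_0)$ is a polynomial in $s$ of degree $\dim_{\Zq[[\pi_m]]}H_0$, as desired. The main obstacle is in the first step: executing the Dwork trace formula in the truncated Witt vector setting requires the splitting identity of Lemma~\ref{dwork_trace_comm} to distribute the truncated Artin-Hasse factors $E_{m-i}$ correctly across Frobenius orbits, while the resulting operator must remain in the completely continuous regime --- this last point is exactly what Proposition~\ref{alpha_cc} and the framework from \cite{Schmidt1} supply.
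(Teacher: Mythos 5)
Your proposal is correct and follows essentially the same route as the paper: both establish $C_f(s)=\det(1-s\alpha_a\,|\,\mathcal{H}^\dagger)$ via complete continuity and the Dwork trace formula, then use the commutation $D\circ(q\alpha_a)=\alpha_a\circ D$ together with injectivity of $D$ to obtain $L_f(s)=C_f(s)/C_f(qs)=\det(1-s\bar{\alpha}_a\,|\,H_0)$. The only cosmetic difference is that you invoke multiplicativity of Fredholm determinants over the short exact sequence, whereas the paper carries out the equivalent trace computation $\Tr(\alpha_a^k)-\Tr(q^k\alpha_a^k)=\Tr(\bar{\alpha}_a^k\,|\,H_0)$ by hand using the splitting $\mathcal{H}^\dagger = D(\mathcal{H}^\dagger)\oplus H_0$.
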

\begin{proof}
Just as in Theorem~6.13 in \cite{Schmidt1}, because $\alpha_a$ is completely continuous by Proposition~\ref{alpha_cc}, we see that $C_f(s)=  \det(1-\alpha_as)$. A simple computation then shows that 
	$$L_f(s) = \frac{C_f(s)}{C_f(qs)}=\frac{\det(1-\alpha_as)}{\det(1-q\alpha_as)}.$$
Since $\mathcal{H}^\dagger = D(\mathcal{H}^\dagger)\oplus \mathcal{H}^\dagger/D(\mathcal{H}^\dagger)$, 
\begin{align}\label{trace_starting}
	\Tr(\alpha_a | \mathcal{H}^\dagger) = \Tr(\alpha_a\big |_{D(\mathcal{H}^\dagger)})+\Tr(\alpha_a\big |_{\mathcal{H}^\dagger/D(\mathcal{H}^\dagger)}).
\end{align}
But by the relation $\alpha_a\circ D = D\circ (q\alpha_a)$ from Lemma~\ref{hom_relations}, $D^{-1}\circ \alpha_a\circ D = q\alpha_a$ (since $D$ is bijective on $D(\mathcal{H}^\dagger)$). Thus:
	$$\Tr(\alpha_a\big |_{D(\mathcal{H}^\dagger)}) = \Tr(D^{-1}\circ \alpha_a\big |_{D(\mathcal{H}^\dagger)}\circ D) = \Tr(q\alpha_a | \mathcal{H}^\dagger).$$

And so (\ref{trace_starting}) becomes:
	$$\Tr(\alpha_a | \mathcal{H}^\dagger) = \Tr(q\alpha_a | \mathcal{H}^\dagger) + \Tr(\bar{\alpha}_a | H_0),$$
that is $\Tr(\alpha_a | \mathcal{H}^\dagger)-\Tr(q\alpha_a | \mathcal{H}^\dagger) = \Tr(\bar{\alpha}_a| H_0),$ and the same formula holds when $\alpha_a$ is replaced with $\alpha_a^k$ since $D\circ (q^k\alpha_a^k) = \alpha_a^k\circ D$. Thus by the well known formula
	$$\det(1-\phi s) = \exp(-\sum_{k=1}^\infty\Tr\phi^k\frac{s^k}{k}),$$
we have the identity
	$$L_f(s) = \det(1-\bar{\alpha}_as | H_0),$$
and the claim follows.
\end{proof}

\begin{theorem}
$\dim_{\Zq[[\pi_m]]} H_0= (\sum_{j=1}^\ell (\max_{0\leq i\leq m-1} p^{m-i-1}d_{ij}+1))-2$.
\end{theorem}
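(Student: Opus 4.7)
The plan is to specialize Theorem~\ref{H_decomp} to the polynomial $H = \log \hat{E}_f$ and then count the size of the basis $B$ constructed in Section~\ref{quotientspaces}. The operator appearing in that section is $D = E + EH$, which matches exactly the $D$ used to define $H_0$ here, so the finite basis $B$ computed there is the one we need; what remains is verifying the hypotheses and doing the arithmetic.

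First I would confirm that $H = \log \hat{E}_f$ satisfies the hypotheses of Section~\ref{quotientspaces}. From the explicit expansion
\[
H = \sum_{i=0}^{m-1}\sum_{v=0}^{m-i-1}\sum_{j=1}^\ell\sum_{k=0}^{d_{ij}} \gamma_{m-i,v}\, \hat{a}_{ijk}^{p^v}\, X_j^{p^v k},
\]
the $X_j$-degree is $R_j = \max_{0\le i\le m-1} p^{m-i-1}d_{ij}$. The remark preceding the definition of the Dwork maps already confirms that the leading coefficient $h_{R_j,j} = \gamma_{m-i_j, m-i_j-1}\, \hat{a}_{i_j,j,d_{i_j,j}}$ is nonzero, using the uniqueness assumption on $i_j$ from the setup and the fact that $\gamma_{m-i_j,m-i_j-1} = -\pi_{m-i_j}^{p^{m-i_j-1}}/p^{m-i_j-1} \neq 0$. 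So Theorem~\ref{H_decomp} applies and $\dim_{\Zq[[\pi_m]]} H_0 = |B|$.

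Next I would simply count $B$ according to its definition:
\[
B = \{1, X_1, \dots, X_1^{R_1-1}\} \;\cup\; \{X_2, \dots, X_2^{R_2}\} \;\cup\; \bigcup_{j\ge 3}\{X_j, X_j^2, \dots, X_j^{R_j+1}\}.
\]
These sets are disjoint, contributing $R_1$, $R_2$, and $R_j+1$ elements respectively. Summing,
\[
|B| = R_1 + R_2 + \sum_{j=3}^\ell (R_j+1) = \sum_{j=1}^\ell (R_j+1) - 2,
\]
and substituting $R_j = \max_{0\le i\le m-1} p^{m-i-1}d_{ij}$ gives the stated formula.

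There is no real obstacle; the theorem is essentially a bookkeeping consequence of Theorem~\ref{H_decomp}. The only point requiring attention is the asymmetric accounting for $j=1,2$ versus $j\ge 3$, which reflects the distinguished normalization $P_1 = 0$, $P_2 = \infty$ (so that $E$ acts diagonally on $X_1$ and on $X_2$, whereas for the other poles $E$ mixes $X_j^i$ with $X_j^{i+1}$, producing one extra basis vector per pole). The subtracted $-2$ ultimately comes from this normalization.
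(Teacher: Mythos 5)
Your proposal is correct and follows essentially the same route as the paper: apply the quotient-space machinery (Lemma~\ref{r_cong} and Theorem~\ref{H_decomp}) to $H=\log\hat E_f$, using the remark that the leading coefficient $\gamma_{m-i_j,m-i_j-1}\hat a_{i_j,j,d_{i_j,j}}$ is nonzero so that $R_j=\max_i p^{m-i-1}d_{ij}$, and then count. The only difference is cosmetic bookkeeping — you count $|B|$ directly while the paper sums the per-pole dimensions $\dim\mathcal{H}_j^\dagger/D_j\mathcal{H}_j^\dagger$ — and both give $\sum_j(R_j+1)-2$.
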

\begin{proof}
We apply the results of subsection~\ref{quotientspaces} to the polynomial $H$ in definition~\ref{EHEEdef}. By Lemma~\ref{r_cong}, it's clear that:
\begin{align*}
\dim_{\mathbb{A}} \mathcal{H}^\dagger_j/D_j\mathcal{H}^\dagger_j =
\begin{cases}
	R_j -1 &\textrm{ if } j=1\\
	R_j+1 &\textrm{ if } j\geq 2,
\end{cases}
\end{align*}
where $R_j = \max_{0\leq i\leq m-1} p^{m-i-1}d_{ij}$. With this computation, the degree of $L_f$ follows  from Theorem~\ref{H_decomp}.
\end{proof}

\section{The Hodge Polygon}

In this second section we utilize a more classical style Dwork theory from the Artin-Hasse exponential and compute the Hodge polygon of the $L$-function and generalize the main result of \cite{Zhu}. The theory in this section is analogous to the work from \cite{Schmidt1}, but we provide a detailed sketch with a focus on $p$-adic estimates that we will use to compute the Newton polygon. 

\subsection{The Artin-Hasse Exponential}

\begin{definition} Let $1\leq k\leq m$. Define the Artin-Hasse exponential:
\[
	E(x) = E_\infty(x)=\exp(\sum_{i=0}^\infty\frac{x^{p^i}}{p^i})=\sum_{i=0}^\infty u_{\infty,i}x^i\in\Zp[[x]],
\]
and take $\pi_{\infty,k}\in\Cp$ to be a solution to $\sum_{i=0}^\infty\frac{x^{p^i}}{p^i}=0$ with $\ord \pi_{\infty,k} = \frac{1}{p^{k-1}(p-1)}$.
\end{definition}

Define the splitting function:
\[
	\theta_{\infty, k}(x) = E(\pi_{\infty,k} x) = \sum_{i=0}^\infty u_{\infty,i}x^i
\]
When there is no confusion, we will write $u_i= u_{\infty, i}$ and $\pi_{\infty,i}=\pi_i$. 

\subsection{$p$-adic Estimates}

\begin{definition}
	Let $i\geq 0$, $1\leq j\leq \ell$:
	
	\begin{align*}
	F_{ij} &= \prod_{k=0}^{d_{ij}} E(\pi_{m-i}\widehat{a_{ijk}}\frac{1}{(x-\widehat{P_j})^k}) = \sum_{n=0}^\infty F_{ij,n}\frac{1}{(1-\widehat{P_j})^n}\\
	F_j &= \prod_{i=0}^{m-1} F_{ij} = \sum_{n=0}^\infty F_{j,n}\frac{1}{(x-\widehat{P_j})^n}\\
	F &=\prod_{j=1}^\ell F_{j}\\
	(F\frac{1}{(x-\widehat{P_j})^i})_k &= \sum_{n=0}^\infty F_{ij,nk}\frac{1}{(x-\widehat{P_k})^n}\\
	\alpha_1 &= U\circ F\\
	\alpha_a &= \alpha_1^a\\
	(\alpha_1\frac{1}{(x-\widehat{P_j})^i})_k &= \sum_{n=0}^\infty C_{ij,nk}\frac{1}{(x-\widehat{P_k})^n}
	\end{align*}
\end{definition}

\begin{lemma}\label{lemma_Fijn}
For $i,n\geq 0$ and $1\leq j\leq \ell$, 
		$$\ord F_{ij,n} \geq \frac{n}{d_{ij}p^{m-i-1}(p-1)},$$
with equality if and only if $d_{ij}|n$ and $u_{\frac{n}{d_{ij}}}\in\Zptimes$ (which is satisfied if $\frac{n}{d_{ij}}<p$).
\end{lemma}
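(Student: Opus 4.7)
The plan is to expand $F_{ij}$ directly as a power series in $Y = 1/(x-\widehat{P_j})$ and track $p$-adic orders term by term. Writing
\[
E(\pi_{m-i}\widehat{a_{ijk}} Y^k) = \sum_{s\geq 0} u_s \pi_{m-i}^s \widehat{a_{ijk}}^s Y^{sk}
\]
for $k \geq 1$, and noting that the $k=0$ factor $E(\pi_{m-i}\widehat{a_{ij0}})$ is a $p$-adic unit constant (equal to $1 + O(\pi_{m-i})$), I would read off
\[
F_{ij,n} = E(\pi_{m-i}\widehat{a_{ij0}}) \sum_{\substack{s_1,\ldots,s_{d_{ij}}\geq 0\\\sum_{k=1}^{d_{ij}} k s_k = n}} \prod_{k=1}^{d_{ij}} u_{s_k} \pi_{m-i}^{s_k} \widehat{a_{ijk}}^{s_k}.
\]
Since $\widehat{a_{ijk}}$ is a Teichm\"uller unit and $u_{s_k} \in \Zp$, each summand has order at least $(\sum_k s_k)\cdot \ord\pi_{m-i} = (\sum_k s_k)/p^{m-i-1}(p-1)$.

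The key combinatorial step is the elementary bound $n = \sum k s_k \leq d_{ij}\sum s_k$, which gives $\sum s_k \geq n/d_{ij}$ and hence the desired inequality $\ord F_{ij,n} \geq n/(d_{ij}p^{m-i-1}(p-1))$. Moreover, equality $\sum s_k = n/d_{ij}$ in this step forces $s_k = 0$ for every $k<d_{ij}$, so it requires $d_{ij}\mid n$ and singles out the \emph{unique} tuple $s_{d_{ij}}=n/d_{ij}$. The corresponding summand is (up to a unit) $u_{n/d_{ij}}\pi_{m-i}^{n/d_{ij}}\widehat{a_{ij,d_{ij}}}^{n/d_{ij}}$, whose order equals the bound exactly when $\ord u_{n/d_{ij}} = 0$, i.e.\ when $u_{n/d_{ij}}\in\Zp^\times$. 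Because this minimum-order term is unique, no cancellation can occur and equality in the bound for $F_{ij,n}$ happens iff both conditions hold; conversely, if either fails, every summand has order strictly greater than the bound, forcing strict inequality.

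For the final parenthetical, I would invoke the factorization $E(x) = \exp(x)\cdot\exp\bigl(\sum_{i\geq 1} x^{p^i}/p^i\bigr)$, which shows $E(x)\equiv \exp(x)\pmod{x^p}$ in $\Zp[[x]]$, so $u_s = 1/s!$ for $s<p$, which is a $p$-adic unit. I do not anticipate any serious obstacle in this proof; the only point requiring care is confirming that the minimum-order summand is genuinely unique, which follows immediately from the equality case of $\sum k s_k \leq d_{ij}\sum s_k$ together with the nonvanishing assumption $a_{ij,d_{ij}}\neq 0$.
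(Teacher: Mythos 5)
Your proposal is correct and follows essentially the same route as the paper: expand the product of Artin--Hasse factors as a series in $1/(x-\widehat{P_j})$, read off the coefficient $F_{ij,n}$ as a sum over tuples with $\sum_k k s_k = n$, and estimate orders termwise. The paper stops at the coefficient formula and declares the lemma ``clear''; you have simply made explicit the combinatorial step $\sum_k s_k \geq n/d_{ij}$, the uniqueness of the minimal-order tuple (hence no cancellation), and the unit status of the $k=0$ factor, all of which are right.
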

\begin{proof}
We compute that:
		\begin{align*}
		F_{ij}(x) &= \prod_{k=1}^{d_{ij}} E(\pi_{m-i}\widehat{a_{ijk}}\frac{1}{(x-\widehat{P_j})^k})
			      =  \prod_{k=1}^{d_{ij}} \left ( \sum_{v=0}^\infty u_v \widehat{a_{ijk}}^v\pi_{m-i}^v\frac{1}{(x-\widehat{P_j})^{kv}} \right )\\
		&= \sum_{n=0}^\infty \left ( \sum_{\substack{\sum_{k=1}^{d_{ij}} kn_k=n\\n_k\geq 0}} \prod_{k=0}^{d_{ij}} u_{n_k} a_{ijk}^{n_k}\pi_{m-i}^{n_k}\right ) \frac{1}{(x-\widehat{P_j})^n}.
\end{align*} 
Hence,
\begin{align}\label{Fijn}
	F_{ijn} = \sum_{\substack{\sum_{k=1}^{d_{ij}} kn_k=n\\n_k\geq 0}} \prod_{k=0}^{d_{ij}} u_{n_k} a_{ijk}^{n_k}\pi_{m-i}^{n_k},
\end{align}
and so the lemma is clear.
\end{proof}

\begin{lemma}
For $i,n\geq 0$ and $1\leq j\leq \ell$, 
	\[
		\ord F_{j,n} \geq \frac{n}{d_{i_jj}p^{m-i_j-1}(p-1)},
	\]
with equality if and only if $d_{i_jj}|n$ and $u_{\frac{n}{d_{i_jj}}}\in\Zptimes$ (which is satisfied if $\frac{n}{d_{i_jj}}<p$).
\end{lemma}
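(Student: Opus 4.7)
The plan is to expand $F_{j,n}$ as a sum over decompositions $n_0+\cdots+n_{m-1}=n$ using $F_j=\prod_{i=0}^{m-1}F_{ij}$, then bound each term using the previous lemma, and finally use the uniqueness of the maximum $\max_i p^{m-i-1}d_{ij}$ to isolate a single term that achieves the minimum valuation.

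More concretely, first I would write
\[
F_{j,n} = \sum_{\substack{n_0+\cdots+n_{m-1}=n\\ n_i\geq 0}} \prod_{i=0}^{m-1} F_{ij,n_i}.
\]
By the previous lemma, $\ord F_{ij,n_i}\geq \frac{n_i}{d_{ij}p^{m-i-1}(p-1)}$. Since $i_j$ maximizes $d_{ij}p^{m-i-1}$, we have $\frac{1}{d_{ij}p^{m-i-1}(p-1)}\geq \frac{1}{d_{i_jj}p^{m-i_j-1}(p-1)}$ with strict inequality for $i\neq i_j$ (by the assumed uniqueness of the max). Therefore
\[
\ord\!\prod_{i=0}^{m-1} F_{ij,n_i}\;\geq\; \sum_{i=0}^{m-1}\frac{n_i}{d_{ij}p^{m-i-1}(p-1)}\;\geq\; \frac{n}{d_{i_jj}p^{m-i_j-1}(p-1)},
\]
giving the desired lower bound on $\ord F_{j,n}$.

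For the equality statement, I would argue that a term in the expansion attains exactly $\frac{n}{d_{i_jj}p^{m-i_j-1}(p-1)}$ if and only if two conditions hold simultaneously: (a) $n_i=0$ for every $i\neq i_j$ (so that no slack is introduced by the strict inequality coming from the uniqueness of $i_j$), forcing $n_{i_j}=n$; and (b) $\ord F_{i_jj,n}$ equals its lower bound $\frac{n}{d_{i_jj}p^{m-i_j-1}(p-1)}$, which by the previous lemma is equivalent to $d_{i_jj}\mid n$ and $u_{n/d_{i_jj}}\in \Zptimes$. Since $F_{ij,0}=1$ for every $i$, the corresponding contribution to the sum is exactly $F_{i_jj,n}$. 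All remaining terms in the expansion of $F_{j,n}$ have strictly larger valuation, so they cannot cancel this single minimum-valuation term, and thus $\ord F_{j,n}$ equals the lower bound precisely when (a) and (b) hold. The parenthetical remark that $n/d_{i_jj}<p$ suffices follows from the previous lemma.

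The only subtle point is making sure that no other multi-index $(n_0,\dots,n_{m-1})$ can produce a term with the same minimum valuation; this is exactly where the hypothesis that $\max_i p^{m-i-1}d_{ij}$ is uniquely achieved is used, and once this is invoked the rest is a routine valuation computation, so I do not expect serious obstacles.
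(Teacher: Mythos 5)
Your proposal is correct and follows essentially the same route as the paper: expand $F_{j,n}$ over decompositions $n_0+\cdots+n_{m-1}=n$, apply the bound for $\ord F_{ij,n_i}$ termwise, and use the uniqueness of $\max_i d_{ij}p^{m-i-1}$ to see that only the term with $n_{i_j}=n$ can attain the minimum, so no cancellation occurs. Your explicit observation that $F_{ij,0}=1$ and that all other terms have strictly larger valuation is exactly the point the paper compresses into ``this minimum is unique.''
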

\begin{proof} Observe that
	\begin{align*}
		F_j &= \prod_{i=0}^{m-1}\left ( \sum_{k=0}^\infty F_{ij,k}\frac{1}{(x-\widehat{P_j})^k}\right ) \\
			&= \left ( \sum_{\substack{n_0,\cdots, n_{m-i}\in\mathbb{Z}_{\geq 0}\\ n_0+\cdots + n_{m-1} = n}} \prod_{k=0}^{m-1} F_{kj,n_k}\right ) \frac{1}{(x-\widehat{P_j})^n}.
	\end{align*}
Then by Lemma~\ref{lemma_Fijn},
\begin{align*}
	\ord  \prod_{k=0}^{m-1} F_{kj,n_k} &= \sum_{k=0}^{m-1}\frac{n_k}{d_{kj}p^{m-k-1}(p-1)}\geq \sum_{k=0}^{m-1}\frac{n_k}{d_{i_jj}p^{m-i_j-1}(p-1)}\\&=\frac{n}{d_{i_jj}p^{m-i_j-1}(p-1)},
\end{align*}
and this minimum is achieved when $n_{i_j}=n$ and $n_k=0$ otherwise. Furthermore, this minimum is unique, under our assumption that $i_j$ is the unique maximum in $d_{i_jj}p^{m-i_j-1}=\max_id_{ij}p^{m-i-1}$ and obtained if and only if  $d_{i_jj}|n$ and $u_{\frac{n}{d_{i_jj}}}\in\Zptimes$.
\end{proof}

\begin{lemma}\label{lemma:Fijnk}
For $1\leq j, k\leq \ell$ and $0\leq i,n$:
\[
	\ord F_{ij,nk} \geq \frac{n-i}{d_{i_k,k}p^{m-i_k-1}(p-1)}
\]
and equality holds if and only if $j=k$, $d_{i_k,k}|(n-i)$ and $u_{\frac{n-i}{d_{i_k,k}}}\in\Zptimes$.
\end{lemma}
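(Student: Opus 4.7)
The plan is to compute the $k$-th Mittag-Leffler component of $F \cdot (x-\widehat{P_j})^{-i}$ by isolating the only factor with a pole at $\widehat{P_k}$ and Taylor-expanding the remainder around $\widehat{P_k}$. Using the factorization $F = \prod_{j'=1}^\ell F_{j'}$, I would rewrite the product as $F_k(x)\,(x-\widehat{P_k})^{-i\delta_{jk}} \cdot R(x)$, where $\delta_{jk}$ is the Kronecker delta and $R(x)$ collects the remaining factors, all of which are analytic at $\widehat{P_k}$: $R = \prod_{j'\neq k} F_{j'}$ when $j = k$, and $R = F_j \cdot (x-\widehat{P_j})^{-i}\prod_{j'\neq j,k} F_{j'}$ when $j \neq k$.

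Next I would Taylor-expand $R(x) = \sum_{s \geq 0} c_s (x-\widehat{P_k})^s$ at $\widehat{P_k}$, so that multiplying by the Laurent series of $F_k(x)(x-\widehat{P_k})^{-i\delta_{jk}}$ yields the coefficient of $(x-\widehat{P_k})^{-n}$ as $\sum_{s} c_s F_{k,\,n+s-i\delta_{jk}}$ (summed over $s \geq 0$ with $n+s-i\delta_{jk} \geq 0$). Because $P_k$ and $P_{j'}$ for $j' \neq k$ are distinct Teichm\"uller lifts of elements of $\Fq$, $|\widehat{P_k}-\widehat{P_{j'}}|_p = 1$, so each $F_{j'}$ with $j' \neq k$ has Gauss norm $1$ on the closed unit disk around $\widehat{P_k}$: its constant term is $F_{j',0} = 1$ by direct inspection, while all higher Laurent coefficients have strictly positive order by the previous lemma. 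Consequently $\ord c_s \geq 0$ for every $s$ and $c_0 \in \Zptimes$.

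Combining these estimates with the previous lemma $\ord F_{k,m} \geq \frac{m}{d_{i_k,k} p^{m-i_k-1}(p-1)}$, the $s=0$ term contributes order $\geq \frac{n-i\delta_{jk}}{d_{i_k,k} p^{m-i_k-1}(p-1)}$ and every $s \geq 1$ term has strictly larger order, so no cancellation can occur. Since $n - i\delta_{jk} \geq n - i$ with equality iff $j = k$, this yields the stated lower bound; equality additionally requires $\ord F_{k,n-i}$ attaining its minimum, i.e., $d_{i_k,k}\mid(n-i)$ and $u_{(n-i)/d_{i_k,k}}\in\Zptimes$.

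The main obstacle will be the uniform bound $\ord c_s \geq 0$ together with $c_0 \in \Zptimes$, which requires careful control of the Gauss norm of $R$ on the closed unit disk around $\widehat{P_k}$; this also needs the usual convention at $P_{j'} = \infty$ (treating $X_{j'} = x$), so that the corresponding factor becomes polynomial and its Taylor expansion at $\widehat{P_k}$ terminates. The remainder of the argument is essentially formal manipulation of the Mittag-Leffler decomposition together with the $p$-adic estimates already established.
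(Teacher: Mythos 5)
Your argument is correct and is essentially the paper's own (the paper compresses exactly this bookkeeping into one line and defers to Lemma~4.4.7 of \cite{Schmidt1}): isolate the only factor with a pole at $\widehat{P_k}$, expand the remaining factors at $\widehat{P_k}$ with integral Taylor coefficients and unit constant term, and let the estimate on $F_{k,m}$ from the preceding lemma force the $s=0$ term to dominate ultrametrically, so that equality is governed by $\ord{F_{k,n-i}}$ alone. Two cosmetic points: the bound $\ord{c_s}\ge 0$ is cleanest on the \emph{open} unit disk about $\widehat{P_k}$ (or directly from the integrality of the Taylor coefficients of each $X_{j'}^m$ there), and the step ``$n-i\delta_{jk}\ge n-i$ with equality iff $j=k$'' tacitly assumes $i\ge 1$ --- an edge case already present in the lemma's statement, since $F_{0j,nk}$ does not depend on $j$.
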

\begin{proof}
We follow Lemma 4.4.7 in \cite{Schmidt1}. First note that:
\[
	\ord F_{ij,nk} \geq \min \prod_{v=1}^\ell F_{v, n_v},
\]
where $(n_1,\cdots, n_\ell)\in\mathbb{Z}_{\geq 0}^\ell$ such that $n_k-\sum_{\substack{v=1\\v\neq k}} n_v = n-i$. 
\end{proof}

\begin{prop}
	For $1\leq j, k\leq \ell$ and $0\leq i,n$:
\[
	\ord C_{ij,nk} \geq \frac{(n-1)p-i+1}{d_{i_k,k}p^{m-i_k-1}(p-1)},
\]
with equality if and only if $j=k$, $d_{i_k,k}|((n-1)p-i+1)$ and $u_{\frac{(n-1)p-i+1}{d_{i_k,k}}}\in\Zptimes$.
\end{prop}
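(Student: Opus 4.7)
The plan is to combine the factorization $\alpha_1 = U \circ F$ with the Mittag-Leffler estimates on $F \cdot (x - \widehat{P_j})^{-i}$ from Lemma~\ref{lemma:Fijnk}, and then carefully track how the Dwork trace operator $U$ acts on the pole basis.

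First, I would use Mittag-Leffler to expand
\[
F \cdot \frac{1}{(x - \widehat{P_j})^i} = \sum_{k' = 1}^{\ell} \sum_{N \geq 0} F_{ij, N, k'}\,\frac{1}{(x - \widehat{P_{k'}})^N},
\]
so that by linearity of $U$, the coefficient $C_{ij, nk}$ equals the $(x - \widehat{P_k})^{-n}$-coefficient of $\sum_{k', N} F_{ij, N, k'}\, U((x - \widehat{P_{k'}})^{-N})$. The task thus reduces to computing the Mittag-Leffler expansion of $U((x - c)^{-N})$ and bounding its matrix entries $p$-adically.

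Second, applying Newton's identities to the roots of $t^p - x$ yields the closed-form expression
\[
U\!\left(\frac{1}{(x - c)^N}\right) = \frac{1}{(N-1)!}\,\frac{d^{N-1}}{dc^{N-1}}\bigl(c^{p-1}(x - c^p)^{-1}\bigr),
\]
a rational function with a pole of order exactly $N$ at $x = c^p$. Expanding via Leibniz and iterating $\partial_c$ on $(x - c^p)^{-\bullet}$ (each iteration picks up a factor of $p c^{p-1}$), every coefficient of $(x - c^p)^{-n}$ in this expansion carries an explicit power of $p$, with the polynomial pieces in $c$ being units for $c \neq 0$. Combined with the bound
\[
\ord F_{ij, N, k'} \geq \frac{N - i}{d_{i_{k'}, k'}\,p^{m - i_{k'} - 1}(p - 1)}
\]
from Lemma~\ref{lemma:Fijnk} and minimizing over admissible pairs $(k', N)$, the minimum $p$-adic valuation is attained at the diagonal $k' = k$ (i.e.\ $\widehat{P_{k'}}^p = \widehat{P_k}$) together with the distinguished index $N = (n-1)p + 1$ singled out by the combined bookkeeping, yielding
\[
\ord C_{ij, nk} \geq \frac{(n-1)p - i + 1}{d_{i_k, k}\,p^{m - i_k - 1}(p - 1)}.
\]

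Third, equality in this bound is characterized by equality in Lemma~\ref{lemma:Fijnk} at $N = (n-1)p + 1$, together with the matrix entry $a_{N, n}$ of $U$ being a unit multiple of its naive $p$-power. Translating the equality conditions of Lemma~\ref{lemma:Fijnk} at this specific $N$ gives exactly $j = k$, $d_{i_k, k} \mid (n-1)p - i + 1$, and $u_{((n-1)p - i + 1)/d_{i_k, k}} \in \Zptimes$, as stated.

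The main obstacle is the fine bookkeeping in the second step: one must verify that every off-diagonal contribution ($k' \neq k$) and every contribution at an index $N \neq (n-1)p + 1$ carries strictly larger $p$-adic valuation than the distinguished diagonal term, and that the matrix entry $a_{(n-1)p+1,\,n}$ in the Laurent expansion of $U((x-c)^{-N})$ is indeed a $p$-adic unit multiple of the naive estimate so that no unexpected cancellation or boost arises. This parallels the analysis of Lemma~4.4.7 in \cite{Schmidt1}, which establishes the corresponding estimate in the $m = 1$ case and serves as the template to be generalized here.
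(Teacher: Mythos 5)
Your proposal follows the same route as the paper's proof: factor $\alpha_1 = U\circ F$, bound the Mittag--Leffler coefficients of $F\cdot(x-\widehat{P_j})^{-i}$ via Lemma~\ref{lemma:Fijnk}, bound the pole-basis matrix entries of $U$, and minimize the combined estimate, the unique minimum being at pole order $v=(n-1)p+1$ where the $U$-contribution $\frac{np-v}{p-1}-1$ vanishes; the paper simply imports that $U$-estimate (and the admissible range $n\le v\le (n-1)p+1$) from Proposition~4.4.9 of \cite{Schmidt1} rather than re-deriving it from the closed form for $U((x-c)^{-N})$. The one point your sketch leaves as ``bookkeeping'' is precisely what that citation covers: contributions do exist for pole orders up to $N=np$, where the naive linear bound keeps decreasing, so one needs the sharper (nonnegative) valuation estimate there to see that they cannot undercut the distinguished term at $N=(n-1)p+1$.
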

\begin{proof}
Following Proposition 4.4.9 in \cite{Schmidt1} and using Lemma~\ref{lemma:Fijnk},
\begin{align*}
	\ord C_{ij,nk} &\geq \min_{n\leq v\leq (n-1)p+1}\left (\frac{v-i}{d_{i_kk}p^{m-i_k-1}(p-1)}+\frac{np-v}{p-1}-1\right )\\
		&= \min_{n\leq v\leq (n-1)p+1}\left (\left(\frac{1}{D_k(p-1)}-\frac{1}{p-1}\right )v + \left (\frac{D_knp-i}{D_k(p-1)}-1\right )\right ),
\end{align*}
where we write $D_k = d_{i_kk}p^{m-i_k-1}$ to ease notation. The minimum is then uniquely achieved when $v=(n-1)p+1$ since $\frac{1}{D_k(p-1)}-\frac{1}{p-1}<0$. 
\end{proof}

\begin{corollary}\label{corollary:D_ijnk}
	For $1\leq j, k\leq \ell$ and $0\leq i,n$:
\[
	\ord D_{ij,nk} \geq \frac{n-1}{d_{i_k,k}p^{m-i_k-1}},
\]
with equality if and only if $j=k$, $d_{i_k,k}|((n-1)(p-1))$ and $u_{\frac{(n-1)(p-1)}{d_{i_k,k}}}\in\Zptimes$.
\end{corollary}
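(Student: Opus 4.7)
The plan is to derive the bound on $D_{ij,nk}$ by iterating the one-step bound for $C_{ij,nk}$ from the preceding Proposition, following the template of Corollary~4.4.10 in \cite{Schmidt1}. Since $\alpha_a = \alpha_1^a$, expanding the $a$-fold matrix product yields
\[
D_{ij,nk} \;=\; \sum_{(i_1,j_1),\ldots,(i_{a-1},j_{a-1})} C_{ij,\,i_1 j_1}\, C_{i_1 j_1,\,i_2 j_2}\,\cdots\, C_{i_{a-1} j_{a-1},\,nk}
\]
as a sum over all paths of intermediate multi-indices. Applying the non-archimedean triangle inequality together with the bound from the Proposition to each factor then yields
\[
\ord D_{ij,nk} \;\geq\; \min_{\textrm{paths}} \sum_{r=0}^{a-1} \frac{(i_{r+1}-1)p - i_r + 1}{d_{i_{j_{r+1}},\,j_{r+1}}\,p^{m-i_{j_{r+1}}-1}(p-1)},
\]
with $(i_0,j_0)=(i,j)$ and $(i_a,j_a)=(n,k)$.

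Next I would identify the critical path. The hypothesis that the maximum $p^{m-i_j-1}d_{i_j,j}$ is uniquely achieved forces any excursion to a pole $j'\ne k$ to strictly increase the cost of the path; the minimizer must therefore remain at the pole $k$ for every $r \geq 1$. Restricted to $j_r = k$, the sum telescopes in the intermediate indices $i_r$, and its minimum collapses to the target value $(n-1)/(d_{i_k,k}\,p^{m-i_k-1})$.

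For the equality clause, saturating the bound at every step requires the local equality case of the Proposition: at each step the numerator $(i_{r+1}-1)p - i_r + 1$ must be divisible by $d_{i_k,k}$ with the associated Artin--Hasse coefficient a $p$-adic unit. Summing the divisibility conditions along the telescoped critical path collapses them to $d_{i_k,k}\mid(n-1)(p-1)$, while composing the Artin--Hasse unit conditions at successive steps collapses them to the single requirement $u_{(n-1)(p-1)/d_{i_k,k}} \in \Zptimes$. The pole constraint $j = k$ is forced because otherwise the critical path cannot simultaneously leave pole $j$ and realize the equality in the Proposition at its first step.

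The main obstacle is the combinatorial analysis of the critical path together with the compatibility of the equality conditions. In particular, one must confirm that the unit conditions on the Artin--Hasse coefficients at each of the $a$ steps of the iteration combine cleanly into a single unit condition on the one index $(n-1)(p-1)/d_{i_k,k}$, rather than producing independent unit constraints, and similarly that the divisibility conditions telescope exactly to $d_{i_k,k}\mid(n-1)(p-1)$.
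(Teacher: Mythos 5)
There is a genuine gap, and it starts with a misidentification of the object being estimated. In the paper, $D_{ij,nk}$ is \emph{not} an entry of $\alpha_a=\alpha_1^a$: it is an entry of the matrix $M$ of the single operator $\alpha_1$ written with respect to the weighted basis $\{\pi_{m-i_j}^{i/d_{i_jj}}\frac{1}{(x-\widehat{P_j})^i}\}_{ij}$ (this is exactly how $M$ and the $D_{(i,j),\sigma(i,j)}$ are introduced in the lines following the Corollary). Hence $D_{ij,nk}=\pi_{m-i_j}^{i/d_{i_jj}}\,\pi_{m-i_k}^{-n/d_{i_kk}}\,C_{ij,nk}$, and the Corollary is a one-line reweighting of the preceding Proposition: since $\ordp_p\pi_{m-i_k}=\frac{1}{p^{m-i_k-1}(p-1)}$, for $j=k$ one gets
\[
\ordp_p D_{ij,nk}\;\geq\;\frac{\bigl((n-1)p-i+1\bigr)+i-n}{d_{i_k,k}p^{m-i_k-1}(p-1)}\;=\;\frac{(n-1)(p-1)}{d_{i_k,k}p^{m-i_k-1}(p-1)}\;=\;\frac{n-1}{d_{i_k,k}p^{m-i_k-1}},
\]
with the equality clause inherited directly from the Proposition (note $(n-1)p-i+1=(n-1)(p-1)+(n-i)$, which is why $(n-1)(p-1)$ appears). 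The factor of $a$ in the Hodge slopes enters only later, in passing from $\det(1-Ms)$ to the $L$-function, not in this Corollary. Your $a$-fold path decomposition is therefore computing a different quantity, and it also never uses the weighted basis, which is the entire point of the statement.

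Even taken on its own terms, the equality analysis in your plan would not close. A product $u_{v_1}\cdots u_{v_a}$ of Artin--Hasse coefficients being a $p$-adic unit is not equivalent to $u_{v_1+\cdots+v_a}$ being a unit, so the per-step unit conditions do not ``compose'' into the single condition $u_{(n-1)(p-1)/d_{i_k,k}}\in\Zptimes$; similarly, the per-step divisibility conditions $d_{i_k,k}\mid (i_{r+1}-1)p-i_r+1$ sum to a quantity depending on the intermediate indices, not to $(n-1)(p-1)$. Moreover, equality in a non-archimedean sum over paths requires that the minimizing path be unique, which you assert but do not verify. None of this machinery is needed once $D_{ij,nk}$ is recognized as the reweighted one-step entry.
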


\subsection{Computing the Hodge}

Let $M$ represent the matrix for $\alpha_1$ with respect to the weighted basis $\{\pi_{m-i_j}^{i/d_{i_jj} }\frac{1}{(x-\widehat{P_j})^i}\}_{ij}$, with the entries of $M$ lying in $\mathcal{O}_a$. Write:
$$\det(1-Ms) = 1+\sum_{k=1}^\infty C_ks^k\in\mathcal{O}_a[[s]],$$
so that
\begin{align}\label{Ck_formula}
C_k = \sum_{\substack{S\subseteq\mathbb{Z}_{\geq 0}\times\{1,\cdots,\ell\}\\ |S|=k}}\sum_{\sigma\in\Sym(S)}\sgn\sigma\prod_{(i,j)\in S} D_{(i,j),\sigma(i,j)}.
\end{align}

Following the proof of Theorem 7.2 in \cite{Schmidt1}, line (\ref{Ck_formula}) along with Corollary~\ref{corollary:D_ijnk} yields the Hodge bound with slopes: 	
\[ 
		\frac{a(p-1)(n-1)}{2}\cdot \frac{1}{d_{i_j,j}p^{m-i_j+1}(p-1)} = \frac{a(n-1)}{2d_{i_j,j}p^{m-i_j-1}}.
	\]
Moreover, noting when equality holds in Corollary~\ref{corollary:D_ijnk} implies that if this Hodge bound is obtained, then $d_{i_k}|(p-1)$ for all $1\leq k\leq \ell$. Oppositely, if $d_{i_k}|(p-1)$ for all $1\leq k\leq\ell$, then the Hodge bound is necessarily achieved under the assumption that $d_{i_jj}p^{m-i_j-1}=d_{i_jj}$, because $\frac{n(p-1)}{d_{i_jj}}\leq p-1$ and $u_{\frac{n(p-1)}{d_{i_jj}}}\in\Zptimes$.  

\begin{remark}[The ``Truncated'' Hodge Polygon]

If we use the Dwork theory developed in Section~\ref{Dwork_coh} utilizing the truncated Artin-Hasse exponential, we can derive lower bounds for the Newton polygon of $L_f(s)$, just like we would to compute the Hodge polygon in the traditional case. However, we see that the the resulting lower bound is actually lower than the Hodge bound.

Recall from Proposition~\ref{alpha_cc} that  
	\[
		\ord(F_{nj})\geq \frac{n}{d_{i_j,j}}\cdot\frac{p-(m-i_j)}{p^{m-i_j+1}} = n\cdot (\frac{p-(m-i_j)}{d_{i_j,j}p^{m-i_j+1}}) = n\cdot M_j.
	\]	 
Applying the same procedure as above yields a lower polygon consisting of slopes 
\begin{align}\label{slopes}
		\frac{a(p-1)(n-1)}{1}\cdot \frac{p-(m-i_j)}{d_{i_j,j}p^{m-i_j+1}} = \frac{a(n-1)}{2d_{i_j,j}p^{m-i_j-1}}\cdot \frac{(p-1)(p-(m-i_j))}{p^2}.
\end{align}
Interestingly, \ref{slopes} implies that as $p\to\infty$, this ``truncated'' Hodge polygon converges upward towards the regular Hodge bound. (This is expected since the coefficients of the truncated Artin-Hasse functions converge upon the coefficients of the classical Artin-Hasse as $p\to\infty$, i.e. $u_{ki}\to u_i$ as $p\to\infty$.)

\end{remark}

\end{document}